\documentclass[hidelinks, 12pt, reqno]{amsart}

\usepackage{amsmath}
\usepackage[psamsfonts]{amssymb}
\usepackage{amsfonts}
\usepackage{algorithm2e}
\usepackage{graphicx}
\usepackage{verbatim}
\usepackage{dsfont}
\usepackage{tikz-cd}
\usepackage{mathrsfs}
\usepackage{color}
\usepackage[english]{babel}
\usepackage{fontenc}
\usepackage{indentfirst}
\usepackage{tikz}
\usetikzlibrary{matrix,arrows,decorations.pathmorphing}
\usepackage{algorithm2e}
\usepackage[all]{xy}
\usepackage[T1]{fontenc}
\usepackage{hyperref}
\usepackage{mathtools}
\usepackage{multicol}
\usepackage{float}
\usepackage{physics}
\usepackage{dsfont}
\usepackage{multirow}
\usetikzlibrary{tikzmark}

\setcounter{secnumdepth}{4}
\setcounter{tocdepth}{2}
\newtheorem{theorem}{Theorem}[section]
\newtheorem{prop}[theorem]{Proposition}
\newtheorem{lemma}[theorem]{Lemma}

\newtheorem{question}[theorem]{Question}
\newtheorem{ex}[theorem]{Example}
\newtheorem{dfn}[theorem]{Definition}
\newtheorem{remark}[theorem]{Remark}

\newtheorem{thm}{Theorem}

\setlength{\hoffset}{-.5 in}
\setlength{\voffset}{-.5 in}
\setlength{\textwidth}{6.0 in}
\setlength{\textheight}{8.5 in}

\footskip=30pt

\def\k{{\bf k}}

\newcommand{\bB}{\mathbb{B}}

\newcommand{\bN}{\mathbb{N}}

\newcommand{\bQ}{\mathbb{Q}}
\newcommand{\bR}{\mathbb{R}}
\newcommand{\bS}{\mathbb{S}}
\newcommand{\bT}{\mathbb{T}}

\newcommand{\bV}{\mathbb{V}}

\newcommand{\bZ}{\mathbb{Z}}

\newcommand{\ra}{\rightarrow}

\DeclareMathOperator{\im}{Im}

\allowdisplaybreaks

\begin{document}
\title{Notes on symplectic squeezing in $T^*\bT^n$ and  spectra of Finsler dynamics}

\author{Qi Feng}
\email{feqi@mail.ustc.edu.cn}
\address{School of Mathematical Sciences, University of Science and Technology of China, 96 Jinzhai Road, Hefei Anhui, 230026, China}

\author{Jun Zhang}
\email{jzhang4518@ustc.edu.cn}
\address{The Institute of Geometry and Physics, University of Science and Technology of China, 96 Jinzhai Road, Hefei Anhui, 230026, China}

\begin{abstract}
In this paper, on the one hand, we prove that for $n \geq 2$ any subbundle of $T^*\bT^n$ with bounded fibers symplectically embeds into a trivial subbundle of $T^*\bT^n$ where the fiber is an irrational cylinder. This not only resolves an open problem in \cite{GX20} (which was stated for the $4$-dimension case, that is, $n =2$) and also generalizes to any higher-dimensional situation. The proof is based on some version of Dirichlet’s approximation theorem. On the other hand, we generalize a main result in \cite{GX20}, showing that any $\widetilde{\pi}_1(M)$-trivial Liouville diffeomorphism on $T^*M$ (for instance, a diffeomorphism induced by an isometry on $M$) does not change the full marked length spectrum of a Finsler metric $F$ on $M$, up to a lifting of the Finsler metric $F$ to the unit codisk bundle $D^*_FM$. The proof is based on persistence module theory. 
\end{abstract}
\maketitle

\vspace{4mm}

\section{Introduction}\label{sec-intro}
The aim of this paper is two-folded. One is to solve Problem 2 in \cite{GX20}, which is related to the symplectic (non)-squeezing of domains in $T^*\bT^n$; the other is a generalization of one of the main results, Theorem 1.5 in \cite{GX20}, which investigates the dynamics of a Finsler metric from a quantitative perspective. Both of our main results study domains in cotangent bundles. More precisely, the squeezing results from Theorem \ref{thm-squeezing}, \ref{thm-higher} and \ref{prop-thin-cylinder} discover a novel {\it flexibility} on some non-standard domains in $T^*\mathbb T^n$, while the spectra result from Theorem \ref{thm-app-1} confirms a {\it rigidity} of the full geodesic spectra from unit codisk bundles. These two results, as well as their proofs, are independent to each other. In what follows, we carry out our discussion in two separate sections. 

\subsection{Squeezing in $T^*\bT^n$} \label{ssec-squeezing}
Symplectic (non)-squeezing problems on $T^*\bT^n$ have been studied by many authors, for instance \cite{Sik89,MMT00}. In this paper, we consider a variant, considered earlier in \cite{GX20}. Denote domains $P^{2n}(r), Y^{2n}(r,v)$ in $T^*\bT^n$ by
\begin{align*}
P^{2n}(r)&\coloneqq\bT^n\times \left\{\left.(x_1,\cdots,x_n)\in\bR_{\geq 0}^n\right| x_1+\cdots+x_n\leq r\right\}\\
Y^{2n}(r,v)&\coloneqq\bT^n\times (-r,r)v \times v^{\perp}.
\end{align*} 
Here, $v$ is a unit vector in $\bR^n$ and $v^{\perp}$ denotes the {\it hyperplane} in fiber $\bR^n$ that is perpendicular to $v^{\perp}$. In particular, when $n=2$, $v^{\perp}$ is simply a line that is perpendicular to $v$. 
In general, with a topological constraint on the symplectic embeddings, Sikarov proves the following rigidity:
\begin{theorem}[Theorem~3 in \cite{Sik89}]\label{thm-strong-exact}
Let $U,V$ be two open subsets of $\bR^n$ and consider $\bT^n\times U, \bT^n\times V\subset (T^*\bT^n,\omega_{
\rm can}=d\lambda_{\rm can})$. If there exists a symplectic embedding $\Phi\colon\bT^n\times U\ra T^*\bT^n$ with $\im(\Phi)\subset\bT^n\times V$, such that $\Phi^*\lambda_{\rm can}-\lambda_{\rm can}$ is exact in $\bT^n\times U$ and $\Phi^*=i^*\colon H^{1}(T^*\bT^n;\bR)\ra H^{1}(\bT^n\times U;\bR)$, where $i$ is the inclusion,
then $U\subset V$.
\end{theorem}

Now, consider $\bT^n\times U=P^{2n}(r)$ and $\bT^n\times V=Y^{2n}(s,v)$. Note that $U$ is contractible,  the condition that $\Phi^*\lambda_{\rm can}-\lambda_{\rm can}$ is exact in $\bT^n\times U$ always holds. As $H^1(\bT^n;\bZ)\cong \pi_1(\bT^n)\cong \bZ^n$, the condition that $\Phi^*=i^*\colon H^{1}(\bT^n\times\bR^n;\bR)\ra H^{1}(\bT^n\times U;\bR)$ is equivalent to that $\Phi$ is $\tilde{\pi}_1(\bT^n)$-trivial, which means $\Phi_* \alpha = \alpha$ for any $\alpha \in \tilde{\pi}_1(\bT^n)\coloneqq [S^1, \bT^n]$. Then Theorem~\ref{thm-strong-exact} implies the non-existence of $\widetilde{\pi}_1(\bT^n)$-trivial symplectic embeddings $\Phi$ from $P^{2n}(r)$ to $Y^{2n}(1, (1, 0, \cdots, 0))$ if $r \geq 2$, where by definition above $Y^{2n}(1, (1, 0, \cdots, 0))$ is just the standard non-tilting cylinder with width $2$.

\medskip

Interestingly, Gong-Xue in \cite{GX20} discovered that if $v$ is an eigenvector of a matrix $A\in{\rm SL}_2(\bZ)$ where ${\rm tr}(A)>2$, then for any $r>0$ there indeed exist symplectic embeddings from $P^{4}(r)$ to $Y^{4}(1,v)$! For instance, as considered in Example at the end of the introduction in \cite{GX20}, if $A$ is the famous Arnold cat map, 
\begin{equation} \label{Acat}
A=\left(\begin{matrix}
2 & 1\\
1 & 1\\\end{matrix}\right)\in{\rm SL}_{2}(\bZ)
\end{equation}
then such an embedding is given based on a linear map in the form of of $\Phi_A = (A^{-1}, A)$ on $T^*\bT^2$. In particular, along the directions of its eigenvectors, iterations of $A$ stretch any domain in $\bR^n$ in one direction while shrinks in the other direction. Then the desired $\Phi$ can be obtained by $\Phi_A^n$ for a sufficiently large $n \in \bN$. 

\medskip

Now, let us consider a general situation where $v$ is an {\it irrational} vector, i.e. $v$ is not a scalar multiple of any integer vector in $\bR^n$. Here is our first main result. 

\begin{thm}\label{thm-squeezing}
Let $v$ be an irrational unit vector in $\bR^2$, then there exist a symplectic embedding from $P^{4}(r)$ to $Y^{4}(1,v)$ for any $r>0$. In particular, any bounded domain in $T^*\bT^2$ can be symplectically embedded into $Y^{4}(1,v)$. 
\end{thm}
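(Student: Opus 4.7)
The ``In particular'' clause is immediate from the main assertion: any bounded fiber $K \subset \bR^2$ can be translated into the positive orthant by a $\widetilde{\pi}_1(\bT^2)$-trivial fiber shift $(x,p)\mapsto(x,p+p_0)$ and then enclosed in a simplex $\{x_1+x_2\le R,\ x_i\ge 0\}$ for $R$ large, giving an embedding into $P^4(R)$. So the core task is to build a symplectic embedding $P^4(r) \hookrightarrow Y^4(1, v)$ for arbitrary $r > 0$.

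The plan extends the Gong--Xue iteration $\Phi_A = (A^{-1}, A)$ to irrational $v$ by letting $A$ be a carefully chosen hyperbolic element of ${\rm SL}_2(\bZ)$ whose shrinking eigenvector approximates $v$ to arbitrary precision. Writing $v \propto (1,\alpha)$ with $\alpha \in \bR\setminus\bQ$, let $(q_n, p_n)$ and $(q_{n-1}, p_{n-1})$ be two consecutive continued-fraction convergents of $\alpha$; by the classical continued-fraction theory (a sharp form of Dirichlet's approximation),
\[
|q_n\alpha - p_n| < \tfrac{1}{q_{n+1}} \leq \tfrac{1}{q_n}, \qquad |q_{n-1}\alpha - p_{n-1}| < \tfrac{1}{q_n},
\]
and $B_n = \begin{pmatrix} q_n & q_{n-1} \\ p_n & p_{n-1} \end{pmatrix}$ lies in ${\rm GL}_2(\bZ)$. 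Fix once and for all a symmetric hyperbolic $C \in {\rm SL}_2(\bZ)$, e.g.\ the Arnold cat map \eqref{Acat}, with expansion eigenvalue $\lambda > 1$ and shrinking eigenvector $(1,\gamma)^T$, and set $A_n := B_n C B_n^{-1}$. Its shrinking eigenvector is $B_n(1,\gamma)^T = (q_n + q_{n-1}\gamma,\, p_n + p_{n-1}\gamma)^T$, with slope
\[
\frac{p_n + p_{n-1}\gamma}{q_n + q_{n-1}\gamma} = \alpha + \frac{(p_n - q_n\alpha) + \gamma(p_{n-1} - q_{n-1}\alpha)}{q_n + q_{n-1}\gamma} = \alpha + O\!\left(\tfrac{1}{q_n^2}\right),
\]
so the angle $\epsilon_n$ between $v$ and this eigenvector satisfies $\epsilon_n = O(1/q_n^2)$.

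The cotangent lift $\Phi_{A_n} = (A_n^{-1}, A_n)$ is $\widetilde{\pi}_1(\bT^2)$-trivial by construction, and $m$ iterations send $P^4(r) = \bT^2 \times \Delta(r)$ to $\bT^2 \times A_n^m(\Delta(r))$: a long thin triangle with extent of order $r\lambda^m$ along the expanding eigenvector and of order $r\lambda^{-m}$ along the shrinking one. Since $v$ makes angle $\epsilon_n$ with the shrinking direction, the $v$-extent of the image is at most $r\lambda^{-m} + r\lambda^m \sin\epsilon_n$. Optimizing in $m$ gives a minimum of order $r\sqrt{\sin\epsilon_n} = O(r/q_n)$; taking $n$ so large that $q_n$ exceeds a universal multiple of $r$ forces this to be less than $1$, whence $\Phi_{A_n}^m(P^4(r)) \subset \bT^2\times (-1,1)v\times v^{\perp} = Y^4(1,v)$. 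Such $n$ exist for any $r$ since $q_n\to\infty$.

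The main obstacle is the quantitative eigenvector estimate: the denominator $q_n + q_{n-1}\gamma$ in the slope formula could a priori be anomalously small (corresponding to the exceptional case $q_{n-1}/q_n \approx -1/\gamma$), which would spoil the $O(1/q_n^2)$ bound. This is resolved by choosing $\gamma>0$, so the two summands have the same sign and the denominator is bounded below by $q_n$; concretely, replace $C$ by its inverse or by another hyperbolic integer matrix with positive-slope shrinking eigenvector. Once this denominator control is secured, the remainder is routine linear algebra, and the $\widetilde{\pi}_1(\bT^2)$-triviality of $\Phi_{A_n}$ is automatic because it descends from an element of ${\rm SL}_2(\bZ)$.
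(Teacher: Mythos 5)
Your approach is genuinely different from the paper's: the paper uses a single Bézout matrix $A_i$ per approximant $(p_i,q_i)$ chosen so that the row-vector identity $(p_i,q_i)A_i = (1,0)$ holds exactly, which makes the $v_i$-extent of $A_i\bigl(\Delta^2(\sqrt{p_i^2+q_i^2})\bigr)$ exactly equal to $1$ with no iteration needed; you instead conjugate a fixed hyperbolic $C$ by a convergent matrix $B_n$ and then iterate. Unfortunately your version has a genuine gap, and it is not where you flagged it (the denominator $q_n+q_{n-1}\gamma$ is indeed handled by taking $\gamma>0$): the problem is that $A_n = B_n C B_n^{-1}$ is far from normal, so its two eigenvectors $B_n e_-$ and $B_n e_+$ are \emph{not} orthogonal. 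In fact both of them converge to the direction of $v$ as $n\to\infty$: the slope of $B_n e_+ = (-\gamma q_n + q_{n-1},\, -\gamma p_n + p_{n-1})^{\mathrm T}$ is
\[
\frac{-\gamma p_n + p_{n-1}}{-\gamma q_n + q_{n-1}} \;=\; \alpha + \frac{-\gamma (p_n - q_n\alpha)+(p_{n-1}-q_{n-1}\alpha)}{-\gamma q_n + q_{n-1}} \;=\; \alpha + O\!\left(\frac{1}{q_n^2}\right),
\]
the same estimate you derive for $B_n e_-$. Equivalently, $\sin\psi_n = \dfrac{|\det B_n|}{|B_n e_-|\,|B_n e_+|} \asymp q_n^{-2} \to 0$, where $\psi_n$ is the angle between the two eigendirections of $A_n$.

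Because of this, your key estimate \emph{``the $v$-extent of $A_n^m(\Delta(r))$ is at most $r\lambda^{-m}+r\lambda^m\sin\epsilon_n$''} fails. That estimate tacitly assumes the expanding eigendirection is transverse to $v$, so that the contribution of the $\lambda^m$-stretching in the $v$-direction is suppressed by $\sin\epsilon_n$. Here the expanding eigendirection $B_n e_+$ is, up to an angle $O(q_n^{-2})$, \emph{parallel} to $v$, so the $\lambda^m$-stretching produces a $v$-extent that grows like $r\lambda^m$, not one that shrinks. Concretely, the relevant quantity is $\|(A_n^{\mathrm T})^m v\|$ (since the $v$-extent of $A_n^m(\Delta(r))$ is of order $r\|(A_n^{\mathrm T})^m v\|$); with $(A_n^{\mathrm T})^m = (B_n^{\mathrm T})^{-1} C^m B_n^{\mathrm T}$ one checks that the $e_+$-component of $B_n^{\mathrm T}v$ has size $\asymp q_n$, and after multiplication by $\lambda^m$ and then by $(B_n^{\mathrm T})^{-1}$ (also of operator norm $\asymp q_n$), one gets a contribution $\asymp q_n^2\lambda^m$, which blows up. Even if you repair the left-vs-right eigenvector mismatch by replacing $A_n$ with $(B_n^{\mathrm T})^{-1} C B_n^{\mathrm T}$ (so that $A_n^{\mathrm T} = B_n C B_n^{-1}$ has shrinking eigenvector $B_n e_-\approx v$), the same non-normality shows that $\|(A_n^{\mathrm T})^m v\|$ is bounded below by a constant for all $m\ge 0$: both $e_\pm$-components of $B_n^{-1}v$ have size $\asymp 1/q_n$, and $B_n$ restores them to size $\asymp 1$, so the minimum over $m$ is attained near $m=0$ and is $\asymp 1$, not small. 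Thus iterating a single conjugated hyperbolic matrix cannot achieve arbitrarily strong contraction in the $v$-direction, and the proposed optimization in $m$ does not go through. The paper avoids this entirely: for its $A_i$, the identity $A_i^{\mathrm T}(p_i,q_i)^{\mathrm T}=(1,0)^{\mathrm T}$ gives $\|A_i^{\mathrm T}v_i\| = 1/\sqrt{p_i^2+q_i^2}$ on the nose, which is the contraction you need, and the passage from $v_i$ to $v$ is then a uniform $O(1)$ loss controlled by the angle $\theta_i\lesssim 1/q_i^2$ together with $\|A_i\|\lesssim p_i^2+q_i^2$.
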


Observe that the cases where $v$ is irrational strictly contains the case where $v$ is an eigenvector of a matrix $A\in{\rm SL}_2(\bZ)$ where ${\rm tr}(A)>2$. Therefore, Theorem \ref{thm-squeezing} recovers the squeezing phenomenon above, also it gives an affirmative answer to Problem 2 in \cite{GX20}. As a matter of fact, the existence of such an embedding also holds in higher dimensional cases.

\begin{thm}\label{thm-higher}
Let $v$ be an irrational unit vector in $\bR^n$, where $n\in\bN_{\geq 3}$, then there exist a symplectic embedding from $P^{2n}(r)$ to $Y^{2n}(1,v)$ for any $r>0$. In particular, any bounded domain in $T^*\bT^n$ can be symplectically embedded into $Y^{2n}(1,v)$. 
\end{thm}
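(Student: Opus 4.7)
The plan is to construct the embedding via a single linear symplectomorphism of $T^*\bT^n$ built from an integer matrix, which reduces the whole theorem to a statement of simultaneous Diophantine approximation. Specifically, every $B \in {\rm SL}_n(\bZ)$ gives rise to a symplectomorphism $\Phi_B(q,p) = (Bq, B^{-T} p)$ of $T^*\bT^n = \bT^n \times \bR^n$; the relation $(B^{-T})^T B = I$ makes it symplectic, and integrality of $B$ lets $q \mapsto Bq$ descend to $\bT^n$. A direct computation gives $\Phi_B(P^{2n}(r)) = \bT^n \times B^{-T}\Delta_r$ with $\Delta_r := \{x \in \bR^n_{\geq 0} : x_1 + \cdots + x_n \leq r\}$, and the estimate $|(B^{-T}x)\cdot v| = |x \cdot (B^{-1}v)| \leq r \cdot |B^{-1}v|_{\infty}$ for $x \in \Delta_r$ shows the image lands inside $Y^{2n}(1,v) = \bT^n \times \{p : |p\cdot v| < 1\}$ whenever $|B^{-1}v|_{\infty} < 1/r$. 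The theorem is thereby reduced to the following approximation lemma: for any irrational unit vector $v \in \bR^n$ and any $\epsilon > 0$, there exists $M \in {\rm SL}_n(\bZ)$ with $|Mv|_{\infty} < \epsilon$.

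To prove the lemma I would first observe that the irrationality hypothesis forces the additive subgroup $G := \bZ v_1 + \cdots + \bZ v_n \subset \bR$ to be dense: if $G$ were cyclic, say $G = \bZ \alpha$, then every $v_i$ would be an integer multiple of $\alpha$, contradicting that $v$ is not a scalar multiple of any integer vector. This density, which is the form of Dirichlet's approximation theorem invoked here, produces a primitive integer vector $a = (a_1,\ldots,a_n)$ with $0 < \epsilon_1 := |a \cdot v| < \epsilon$. Primitivity of $a$ lets it extend to the first row of some $M_0 \in {\rm SL}_n(\bZ)$ by standard primitive-vector completion, giving $|(M_0 v)_1| = \epsilon_1 < \epsilon$, although the remaining rows $(M_0 v)_j$ for $j \geq 2$ are a priori uncontrolled.

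The final step is a Euclidean-division trick: for each $j = 2,\ldots,n$, I replace the $j$-th row of $M_0$ by $(\text{row }j) + k_j(\text{row }1)$ with $k_j \in \bZ$ chosen so that $|(M_0 v)_j + k_j \epsilon_1| \leq \epsilon_1/2$, which is always possible by Euclidean division. The resulting matrix $M$ has $|(Mv)_j| \leq \epsilon_1/2 < \epsilon$ for every $j \geq 2$ and $|(Mv)_1| = \epsilon_1 < \epsilon$, hence $|Mv|_{\infty} < \epsilon$; these elementary row operations preserve the determinant, so $M \in {\rm SL}_n(\bZ)$. Choosing $\epsilon$ slightly less than $1/r$ and setting $B = M^{-1}$ completes the construction of $\Phi_B$. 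To upgrade the statement to arbitrary bounded-fiber subbundles, I would enclose such a subbundle in a trivial one of the form $\bT^n \times K$ with $K \subset \bR^n$ bounded, apply a fiber translation $(q,p)\mapsto(q, p+p_0)$ (a symplectomorphism of $T^*\bT^n$) to push $K$ into the positive orthant inside some $\Delta_r$, and apply the main construction.

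The main obstacle is not the existence of one small integer-linear form $a\cdot v$, which density gives cheaply, but producing an entire unimodular matrix $M$ whose rows all give small values against $v$; the Euclidean-division reduction by the single small value $\epsilon_1$ is what resolves this determinant-preserving constraint uniformly across all dimensions $n \geq 3$, and is the key technical point of the argument.
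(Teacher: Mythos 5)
Your proof is correct, and it is genuinely different from—and substantially simpler than—the paper's argument. The paper proceeds in several steps: it approximates $v$ by a sequence of rational unit vectors $v_i$ via the higher-dimensional Dirichlet theorem, constructs for each $i$ a matrix $A_i\in{\rm SL}_n(\bZ)$ sending the corresponding primitive integer tuple to $(1,0,\dots,0)$ with \emph{all entries bounded} by $C(n)r_i$ (this is the rather technical Proposition \ref{prop-key}, requiring an explicit block construction and a row-reduction procedure to enforce the entry bounds), maps $P^{2n}(r_i)$ into the cylinder $Y^{2n}(1,v_i)$, and finally shows this fits inside a fattened cylinder $Y^{2n}(1+\delta_i,v)$ where $\delta_i$ is controlled by the entry bounds and the angle $\theta_i$ between $v_i$ and $v$. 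Your route dispenses with the intermediate rational direction and with all entry bounds: after observing that $\Phi_B(P^{2n}(r))\subset Y^{2n}(1,v)$ reduces to $\abs{B^{-1}v}_\infty<1/r$, you only need one small nonzero integer-linear form $a\cdot v$ (delivered by density of $\bZ v_1+\dots+\bZ v_n$, which is exactly where irrationality of $v$ enters), a primitive-vector completion to get one small row, and the Euclidean-division row reduction to make the remaining rows of $Mv$ even smaller while staying in ${\rm SL}_n(\bZ)$. This is clean, it sidesteps the main technical burden of the paper's proof (Proposition \ref{prop-key} and the $\delta_i$-bookkeeping in \eqref{est-ell}--\eqref{est-res}), and it also handles $n=2$ uniformly, so it subsumes Theorem \ref{thm-squeezing} as well. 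The trade-off is that the paper's construction yields explicit quantitative control (a sequence $r_i$ with stated growth), whereas your argument is purely qualitative—but that is all the theorem asks for. One small polish: you can take $\epsilon=1/r$ outright rather than "slightly less," since the lemma already yields a strict inequality $\abs{Mv}_\infty<\epsilon$.
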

\begin{remark}
It is worth mentioning that when $v$ is rational, i.e., a scalar multiple of an integer vector $\alpha\in \bZ^n\backslash\{0\}$, there indeed exists some obstruction to the $\widetilde{\pi}_1(\bT^n)$-trivial embedding $P^{2n}(r) \hookrightarrow Y^{2n}(1,v)$ since by Theorem 1.18 (i) in \cite{GX20}, the {\rm BPS}-capacity computes as $c_{\rm BPS}(Y^{2n}(1,v), \bT^n, \alpha) =\|\alpha\|$ in this case. In a sharp contrary, we have $c_{\rm BPS}(Y^{2n}(1,v), \bT^n, \alpha) = \infty$ (for any class $\alpha$!) when $v$ is irrational by Theorem 1.18 (ii) in \cite{GX20} (which can also be implied by Theorem \ref{thm-higher}). 
\end{remark}

The approaches to proving Theorem \ref{thm-squeezing} and Theorem \ref{thm-higher} are similar, so here for brevity let us illustrate the outline of the proof of Theorem \ref{thm-squeezing}. To obtain this result, it suffices to construct a sequence of embeddings, 
\begin{equation} \label{seq-emb}
\Phi_i\colon P^4(r_i)  \to Y^{4}(1,v) \,\,\,\,\mbox{where $r_i \to \infty$.}
\end{equation}
Then for any $r\in \bR$, the desired embedding comes from the composition $P^4(r) \hookrightarrow P^4(r_i) \xrightarrow{\Phi_i} Y^{4}(1,v)$ for a sufficiently large $r_i \geq r$, where the first $\hookrightarrow$ is the trivial inclusion. The values $r_i$ are based on an approximation process of the irrational vector $v$ by a sequence of rational vector $\{v_i\}_{i \in \bN}$. Namely, suppose $v$ is a scalar multiple of $(\kappa,1)$ for some irrational number $\kappa$. Due to Dirichlet's approximation theorem (see Theorem 1A in Section I in \cite{Sch80}), there exists a sequence of pairs $\{(p_i,q_i)\}_{i\in\bN}$ satisfying 
\begin{equation} \label{dirichlet}
 \left|\frac{p_i}{q_i}-\kappa\right|<\frac{1}{q_i^2} \,\,\,\,\mbox{and}\,\,\,\, \displaystyle\lim_{i\ra+\infty}q_i=+\infty.
 \end{equation}
Then by elementary geometry, we can construct an embedding
\begin{equation} \label{map-Psi}
\Psi_i\colon P^4\left(\sqrt{p_i^2 + q_i^2}\right) \to Y^4(1+\delta_i, v) \,\,\,\,\mbox{for some $\delta_i>0$.}
\end{equation}
In fact, $\delta_i$ is a function depending on $(p_i, q_i)$, and turns out to be controlled by a uniform constant $C(\kappa)$ (see (\ref{est-delta})), only depending on $\kappa$ (fixed for any given $v$). For an illustration, see Figure \ref{approximate}. 
\vspace{-0.1mm}
\begin{figure}[H]
	\centering
\includegraphics[scale=0.9]{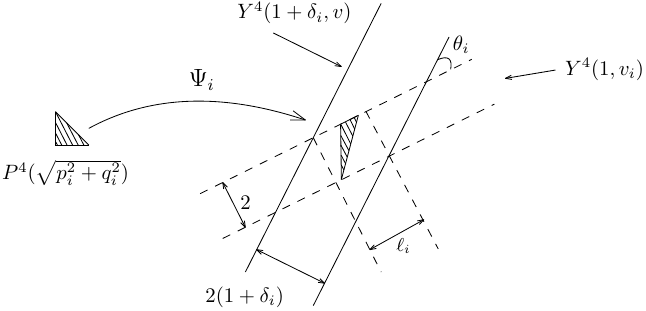}
\caption{Symplectic embedding $\Psi_i$ from $P^4\left(\sqrt{p_i^2 + q_i^2}\right)$ to $Y^4(1+\delta_i,v)$.}\label{approximate}
\end{figure}
\noindent Then by rescaling, consider $r_i \coloneqq  \frac{\sqrt{p_i^2 + q_i^2}}{1+\delta_i}$, then $r_i \geq \frac{q_i}{1+C(\kappa)} \to \infty$ since $q_i \to \infty$ by (\ref{dirichlet}). In other words, rescaled $\Psi_i$ induces the requested $\Phi_i$ in (\ref{seq-emb}). 

\begin{remark} \label{rmk-1} When $n=2$, it is straightforward to control $\delta_i$, where it becomes less obvious in higher dimensional case for $n \geq 3$.  To obtain Theorem \ref{thm-higher}, a more sophisticated estimation is needed, achieved in Proposition \ref{prop-key}. \end{remark}

\begin{remark} It is not difficult to verify that all the irrational unit vectors in $\bR^n$, as in the hypothesis of Theorem \ref{thm-higher}, form a dense subset of the unit sphere $\mathbb S^{n-1}$. Indeed, the set of integer vectors in $\bR^n$  is countable, which implies that the set of rational unit vectors, being scalar multiples of some integer vectors in $\bR^n$ , is also countable. Then the set of irrational unit vectors, as the complement of the rational unit vectors, is dense in $\bS^{n-1}$. 
 Therefore, Theorem \ref{thm-higher} confirms that in dense-many directions $v$, the no-obstruction embedding into $Y^{2n}(1,v)$ exists. One can also obtain this dense-many direction conclusion via an algebraic result, showing that the orbit space of the action ${\rm SL}_n(\bZ)$ on an eigenvector of an Arnold cat matrix, up to rescaling, forms a dense subset of $\mathbb S^{n-1}$. This was generously informed to us by J.~Xue in \cite{X-private} and eventually due to Pengyu Yang. \end{remark}

\noindent {\bf Embedding into a thin cylinder}. When the dimension $2n \geq 6$ (so $n \geq 3$), instead of the ``fat'' cylinder $Y^{2n}(r, v)$ as defined at the beginning of this section where only one direction labelled by $v$ is finite, one can consider another extreme case as follows. Consider, for any unit vector $w \in \bR^n$, the following ``thin'' cylinder, 
\[ X^{2n}(r,w) := \bT^n \times D^{n-1}_{\rm perp}(r) \times \bR w \]
where $D^{n-1}_{\rm perp}(r)$ is a disk of radius $r$ in $\bR^n$, $(n-1)$-dimensional, and perpendicular to the line $\bR w$ pointing in the direction of $w$.  In this case, only one direction labelled by $w$ is infinite. For a $6$-dimensional picture $X^6(r,w)$, see Figure \ref{fig_YX}. 
\begin{figure}[H]
	\centering
\includegraphics[scale=0.95]{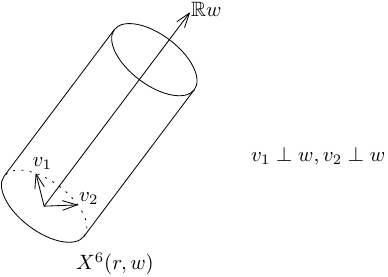}
\caption{A 6-dimensional thin cylinder $X^6(r,w)$.}\label{fig_YX}
\end{figure}

The embedding problem from $P^{2n}(r)$ to $X^{2n}(r,w)$ could still be a non-trivial problem since the volume does not provide any obstruction. Here is an observation for $\widetilde{\pi}_1(\bT^n)$-trivial embeddings: suppose there is a {\it rational} vector $v \in D^{n-1}_{\rm perp}(1)$, then obviously $D^{n-1}_{\rm perp}(1) \subset (-1,1)v \times v^{\perp}$, which implies that $X^{2n}(1,w) \subset Y^{2n}(1,v)$. The obstruction to $P^{2n}(r) \hookrightarrow Y^{2n}(1,v)$ implies an obstruction to $P^{2n}(r) \hookrightarrow X^{2n}(1,w)$. In other words, $c_{\rm BPS}$ provides a finite upper bound, 
\[  \inf\left\{|\lambda| \in \bR_{>0}\,\big| \, \mbox{rational} \, v \in D^{n-1}_{\rm perp}(1) \,\,\mbox{and}\,\,\lambda v \in \bZ^n\right\} \]
which restricts the scale of $r$ in $P^{2n}(r)$ if a $\widetilde{\pi}_1(\bT^n)$-trivial embedding exists. Note that, due to its defining constraint in topology, $c_{\rm BPS}$ is not strong enough to obstruct non-$\widetilde{\pi}_1(\bT^n)$-trivial embeddings.

\medskip

In an opposite direction, motivated by Theorem \ref{thm-squeezing} and Theorem \ref{thm-higher}, by \cite{X-private} here is another intriguing question.

\begin{question}[Posted by J.~Xue] \label{que-1} Suppose $D^{n-1}_{\rm perp}(r)$ contains no rational vectors at all, is it possible that there exists a symplectic embedding from $P^{2n}(r)$ to $X^{2n}(1,w)$ for any $r>0$? \end{question}

The hypothesis in the question above can be reformulated as follows: the given direction $w = (w_1, ..., w_n) \in \bR^n$ is rationally independent (or more concisely $\bQ$-independent, which is equivalent to $\bZ$-independent). Indeed, since $D^{n-1}_{\rm perp}(r)$ lies inside the hyperplane $\{v \in \bR^n \, | \, v \cdot w = 0\}$, the existence of a rational vector $v$ with $\lambda v \in \bZ^n$ for some $\lambda \in \bR \backslash\{0\}$ implies that $\lambda v \cdot w =0$, so $w$ is $\bQ$-dependent. Conversely, suppose $w$ is $\bZ$-dependent, that is, $k_1 w_1 + \cdots k_n w_n = 0$ for some $k_i \in \bZ$, then, up to a rescaling, vector $(k_1, ..., k_n)$ is the desired rational vector in $D^{n-1}_{\rm perp}(r)$. 

\begin{remark} \label{rmk-capacity-torus}
In fact, via Theorem~1.1 in \cite{BBZ24} we can obtain an upper bound of the symplectic capacity of $X^{2n}(1,w)$ as follows, 
\[
c(X^{2n}(1,w))\leq \inf_{ v\perp w}\{2|\alpha| \mid v\text{ is the scalar multiple of }\alpha\in\bZ^n\backslash\{0\}\}
\]
for any normalized symplectic capacity $c(\cdot)$. Therefore, if $w$ is not a $\bQ$-independent vector, then $c(X^{2n}(1,w))$ is finite. In particular, there does not exist a symplectic embedding from $P^{2n}(r)$ to $X^{2n}(1,w)$ when $r$ is sufficiently large. By the discussion right above, Question \ref{que-1} in fact can be formulated as follows: there exists a symplectic embedding from $P^{2n}(r)$ to $X^{2n}(1,w)$ for any $r>0$ if and only if the unbounded direction of thin cylinder $X^{2n}(1,w)$ pointed by vector $w$ is $\bQ$-independent. 
\end{remark}

\begin{remark} Here, we emphasize that, compared with the 2-dimensional case where $\bQ$-independent $(w_1, w_2)$ simply means the quotient $\frac{w_2}{w_1}$ is an irrational number, in the higher-dimensional cases verifying $(w_1, ..., w_n)$ to be $\bQ$-independent sometimes is rather non-trivial in general. \end{remark}

Another more dynamical motivation to Question \ref{que-1} above is a higher-dimensional analogue of the Arnold cat map in (\ref{Acat}). Here, for instance, 
\begin{equation} \label{Acat-2}
A' =\left(\begin{matrix}
2 & 1 & 3 \\
3 & 2 & 5\\
2 & 1 & 4\\\end{matrix}\right)\in{\rm SL}_{3}(\bZ)
\end{equation}
where its three eigenvalues are $\lambda_1 \approx 0.243$, $\lambda_2 \approx 0.573$, and $\lambda_3 \approx 7.184$. Similarly to $A$ in (\ref{Acat}), along the directions of its eigenvectors, iterations of $A'$ stretch any domain in $\bR^n$ (only) in one direction while shrinks in the other two direction. Then, for any $r>0$, sufficiently high iterations of $A'$ maps $P^6(r)$ into $X^6(1,w)$ where $w$ is along the eigenvector of the eigenvalue $\lambda_3$. In fact, denote such an eigenvector by $v_3$, one can verify by hand that components of $v_3$ are $\bQ$-independent. 

\medskip

Our method in proving Theorem \ref{thm-higher} is not applicable (at least not immediately) to answer Question \ref{que-1}. For some special directions $w$, which admit  {\it biased approximation} as defined in Definition \ref{dfn-good}, we provide an affirmative answer to Question \ref{que-1} in dimension $6$ (that is $n= 3$). 

\begin{thm} \label{prop-thin-cylinder}
Let $w$ be a unit (irrational) vector in $\bR^3$ admitting a biased approximation as defined in Definition \ref{dfn-good}, then there exist a symplectic embedding from $P^6(r)$ to $X^6(1,w)$ for any $r>0$. In particular, any bounded domain of $T^*\bT^3$ can be symplectically embedded into $X^6(1,w)$.
\end{thm}
\begin{remark}
The set of vectors that admit biased approximations is nonempty, see Example~\ref{ex-bad} and Remark~\ref{rmk-bad}. In fact, this set contains the set of badly approximable vectors, whose intersection with every open set in $\bR^3$ has full Hausdorff dimension, see \cite{Sch66} and \cite{Sch80}.
\end{remark}

The proof of Theorem~\ref{prop-thin-cylinder} is in a similar spirit to the  proof of Theorem \ref{thm-higher}. Namely, we construct a sequence of matrices $A_i\in{\rm SL}_{3}(\bZ)$ based on coprime triples of $(p_{i,1}, p_{i,2}, p_{i,3})$ from the higher-dimensional Dirichlet’s approximation theorem. With the help of the formally extra condition in (\ref{eq-good}), we are able to control the lengths of multiple directions in $(p_{i,1}, p_{i,2}, p_{i,3})^{\rm T} \times w$ simultaneously, which play a fundamental role in estimating the sizes of the embedded images. Note that this is essentially more difficult than the classical result of the Dirichlet’s approximation theorem, as the quantitative comparisons between components $p_{i,1}, p_{i,2}, p_{i,3}$ of such a coprime triple are not always clear. 

\begin{remark} Though not obvious from Definition \ref{dfn-good}, Theorem~\ref{prop-thin-cylinder} confirms that any vector admits a biased approximation is $\bQ$-independent, and thus Theorem~\ref{prop-thin-cylinder} answers Question \ref{que-1} affirmatively in its setting. 
\end{remark}

\subsection{Dynamics} Let $(M,F)$ be a closed Finsler manifold. The dynamics on $M$ induced by metric $F$ can be studied from a symplectic perspective, via the unit codisk bundle $D^*_{F}M$, as a closed submanifold with boundary in $T^*M$. Indeed, a closed Reeb orbit on the contact manifold $\partial D^*_FM$ precisely correspond to a closed geodesic in $(M, F)$. Although a symplectomorphism $\phi \in {\rm Symp}(T^*M)$ may deform the metric $F$, in Gong-Xue \cite{GX20} one of the main results proves that any $\phi \in {\rm Symp}_0(T^*M)$, the identity component ${\rm Symp}(T^*M)$, does not change the minimal length of the Finsler closed geodesic (see Theorem 1.5 in \cite{GX20}). 

Our second main result generalizes their theorem. In order to state this theorem, we recall some terminologies.
We call a compact exact symplectic manifold $(W,\omega=d\lambda)$ with boundary  a {\it Liouville domain} if there exists a vector filed $X$ such that $\iota_X\omega=\lambda$ and $X$ points outwards along $\partial W$.
 Let $U, V$ be two Liouville domains of $T^*M$ (again by definition they are closed submanifolds with boundaries). Recall that a Liouville (or called an exact) embedding $\phi\colon U \to V$ is {\it $\tilde{\pi}_1(M)$-trivial} if $\phi_* \alpha = \alpha$ for any free homotopy class $\alpha \in \tilde{\pi}_1(M)$. If a Liouville embedding $\phi$ is a diffeomorphism from $U$ to $V$, then we call $\phi$ a {\it Liouville diffeomorphism}. Note that in the same way we can define a Liouville embedding and Liouville diffeomorphism between open Liouville domains, i.e., the interior part of $U$ and $V$,  denoted by $\mathring{U}$ and $\mathring{V}$. Meanwhile, fix a class $\alpha \in \tilde{\pi}_1(M)$, and for any Finsler metric $F$ on $M$ denote by $l_{\alpha}^F$ the minimal length of the closed geodesics in class $\alpha$.
The set of  lengths of all closed geodesics in  a Finsler manifold $(M,F)$ representing $\alpha$ is called the marked length spectrum, denoted by $\Lambda_{\alpha}^{F}$. 
Here is our second main result. 

\begin{thm} \label{thm-app-1} 
Let $F_1, F_2$ be two Finsler metrics on closed manifold $M$. If there exist $\tilde{\pi}_1(M)$-trivial Liouville embeddings $\phi\colon \mathring{D}^*_{F_1}M \to \mathring{D}^*_{F_2}M$ and $\psi\colon \mathring{D}^*_{F_2}M \to \mathring{D}^*_{F_1}M$, then for any $\alpha \in \tilde{\pi}_1(M)$, then $\Lambda_{\alpha}^{F_1} = \Lambda_{\alpha}^{F_2}$ as two unordered sets. In particular, $l^{F_1}_{\alpha} = l^{F_2}_{\alpha}$. 
\end{thm}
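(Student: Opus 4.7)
The plan is to promote the action spectrum of closed Reeb orbits into a persistence-module invariant of the Liouville domain $D^*_F M$ that is preserved by $\tilde\pi_1(M)$-trivial Liouville diffeomorphisms. Concretely, for each $\alpha\in\tilde\pi_1(M)$ and each Finsler metric $F$ on $M$, I would construct a persistence module $\mathbb{V}^{F,\alpha}$ out of the action-filtered symplectic homology $\text{SH}_{\ast}^{<t}(D^*_F M;\alpha)$ of orbits in class $\alpha$. Via the standard correspondence between Reeb orbits on $\partial D^*_F M$ in class $\alpha$, closed $F$-geodesics in class $\alpha$, and generators of this Floer-type complex, the finite endpoints of the barcode of $\mathbb{V}^{F,\alpha}$ will turn out to be exactly the elements of $\Lambda_\alpha^F$.

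First I would build $\mathbb{V}^{F,\alpha}$ from a cofinal family of admissible Hamiltonians vanishing on $D^*_F M$ and linear in the Liouville direction outside, with slopes avoiding $\Lambda_\alpha^F$; their filtered Floer homologies, restricted to class $\alpha$, assemble into the direct limit $\text{SH}_{\ast}^{<t}(D^*_F M;\alpha)$, and the natural inclusions for $s<t$ supply the persistence structure. When $F$ is bumpy, the bar endpoints coincide with $\Lambda_\alpha^F$ via the usual identification between critical action values and births/deaths. For a general $F$ the conclusion follows from a $C^0$-small perturbation together with the stability theorem for persistence modules, which controls interleaving distance by $\|F_1-F_2\|_{C^0}$.

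Next I would verify that a $\tilde\pi_1(M)$-trivial Liouville diffeomorphism $\phi\colon D^*_{F_1}M\to D^*_{F_2}M$ induces an isomorphism $\mathbb{V}^{F_1,\alpha}\xrightarrow{\cong}\mathbb{V}^{F_2,\alpha}$. The hypothesis $\phi_\ast\alpha=\alpha$ ensures that pulled-back orbits stay in class $\alpha$. The Liouville condition $\phi^\ast\lambda_{F_2}=\lambda_{F_1}+df$ for some function $f$, combined with $\int_\gamma df=0$ for any loop $\gamma$, shows that pulling a Hamiltonian back along $\phi$ preserves the Hamiltonian action of every $1$-periodic orbit. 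Thus $\phi$ matches a cofinal admissible family on $D^*_{F_2}M$ with one on $D^*_{F_1}M$ and yields a chain-level isomorphism that is strictly filtration-preserving, hence a persistence-module isomorphism on homology. Equality of barcodes then forces $\Lambda_\alpha^{F_1}=\Lambda_\alpha^{F_2}$ as unordered sets, and taking minima recovers $l_\alpha^{F_1}=l_\alpha^{F_2}$, recovering Theorem 1.5 of \cite{GX20}.

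The step I expect to be the main obstacle is turning this schematic filtered isomorphism into a rigorous one. One must check that a cofinal family on $D^*_{F_2}M$ pulls back to a cofinal family on $D^*_{F_1}M$, that auxiliary data (almost complex structures and, in the non-bumpy case, abstract perturbations) can be transported so that the induced map intertwines Floer differentials and continuation maps, and that the final equality of persistence modules really yields equality of the \emph{sets} of bar endpoints rather than merely a small-width interleaving. Handling these points is the bulk of the work, and it is exactly where the persistence-module techniques advertised in the abstract will be essential.
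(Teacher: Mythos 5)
Your proposal is correct in spirit, but it takes a noticeably different route from the paper. You construct the persistence module $\mathbb V^{F,\alpha}$ directly from action-filtered symplectic homology and then prove by hand that a $\tilde\pi_1(M)$-trivial Liouville diffeomorphism induces a strictly filtration-preserving chain isomorphism (using $\phi^*\lambda_{F_2}=\lambda_{F_1}+df$ and Stokes to preserve actions of closed orbits), hence an isomorphism of persistence modules, hence equal barcodes. The paper instead pulls this off as a two-line consequence of two black-box results: (a) a Liouville diffeomorphism gives $d_{\rm SBM}(D^*_{F_1}M,D^*_{F_2}M)=0$ essentially by definition of the symplectic Banach--Mazur distance (their Lemma~\ref{lemma-app-1}(1)), and (b) the stability theorem $d_{\rm bot}(\mathbb B_\alpha(U),\mathbb B_\alpha(V))\le d_{\rm SBM}(U,V)$ (Theorem~1.6 of \cite{SZ21}, quoted as Theorem~\ref{thm-app-2}), together with non-degeneracy of $d_{\rm bot}$, to conclude the barcodes coincide. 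The paper also routes the identification ``barcode endpoints $=\Lambda_\alpha^F$'' through Theorem~7.4 of \cite{GX20}, which identifies the filtered symplectic homology barcode of $D^*_FM$ with the barcode of the filtered free loop space homology persistence module, whose endpoints are critical values of the energy functional. You propose to read off $\Lambda_\alpha^F$ directly from the Floer-theoretic barcode via a bumpy-plus-perturbation argument, which is plausible but is essentially re-proving that theorem.

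What each approach buys: the paper's proof is short and modular, offloading the hard analysis (cofinal families, auxiliary data, $C^0$-continuity of the filtration under interleaving) to \cite{SZ21} and \cite{GX20}. Your route is more self-contained and makes the mechanism transparent -- in particular it clarifies exactly where $\tilde\pi_1(M)$-triviality and the exactness of $\phi$ enter -- but, as you correctly flag in your final paragraph, it requires independently re-establishing things that the Banach--Mazur/stability machinery packages for you: that pullback of a cofinal admissible family along $\phi$ remains cofinal and admissible, that almost complex structures and perturbations can be transported so that the resulting map is a chain isomorphism commuting with continuation maps, and that the endpoint-set identification with $\Lambda_\alpha^F$ survives the perturbation argument from bumpy to general $F$. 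None of these gaps is a fatal flaw, but each of them is exactly the kind of work that Theorem~\ref{thm-app-2} and Theorem~7.4 of \cite{GX20} were designed to encapsulate, so the paper's citation-driven argument is the more economical one.

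One small caution: both you and the paper assert that the set of barcode endpoints equals $\Lambda_\alpha^F$. Strictly speaking a critical value of the energy functional (equivalently, a Reeb action) might fail to be a barcode endpoint if it is homologically invisible; the stated equality should be read with whatever degeneracy/Morse--Bott hypotheses make the identification in \cite{GX20}~Theorem~7.4 hold. Since you rely on exactly the same identification as the paper, this is not a gap specific to your proposal, but if you write this up in full you should make the hypotheses under which ``endpoints $=\Lambda_\alpha^F$'' holds explicit rather than invoking ``the usual identification.''
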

\begin{remark}
If $D^*_{F_1} M$ and $D^*_{F_2} M$ are diffeomorphic by a Liouville diffeomorphism, then the same conclusion in Theorem \ref{thm-app-1} holds since by Moser's argument the Reeb dynamics on the contact boundaries $\partial D^*_{F_1} M$ and $\partial D^*_{F_2} M$ can be identified via a strict contactomorphism. 

The assumption in our Theorem \ref{thm-app-1} is more general in the following two senses: (1) the domains are open instead of closed, and in general even a symplectomorphism between the interiors may not extend to the closures (see \cite{EH96}); (2) it is unknown at present to authors whether the existence of Liouville embeddings between $U$ and $V$ is equivalent to the existence of a Liouville diffeomorphism between them. 

Our proof of Theorem \ref{thm-app-1} is based on a recently-developed concept - symplectic Banach-Mazur distance, that bypasses the subtlety of various identification or comparison of domains (or their interiors), and directly transfers the full (geodesic) spectral into combinatorial data from barcodes. More details will be given in Section \ref{sec-barcode}. 
\end{remark}

To generalize from the minimal length $\ell_{\alpha}^F$ to the full marked length spectrum $\Lambda_{\alpha}^F$, one needs a tool that is able to detect the change of Finsler metrics in a quantitative way. Here, we consider the filtered loop space homology ${\rm H}_{*}(\Lambda^{\lambda}_{\alpha} M)$, where the sublevel set $\Lambda^{\lambda}_{\alpha} M$ is given by the energy functional defined from the metric $F$. Assemble $\left\{{\rm H}_*(\Lambda^{\lambda}_{\alpha} M)\right\}_{\lambda \in \bR}$ into a persistence module denoted by $V(F)$. For the background on persistence modules, see the briefly introduction in Section \ref{sec-barcode} or the monograph \cite{PRSZ20}. 
Roughly speaking, a persistence module is a family of vector spaces parameterized by $\bR$ with morphisms from lower-indexed fibers to higher-indexed fibers. Moreover, a decomposition theorem of such an algebraic structure produces a collection of ``interval-type'' persistence modules -  the set of these resulting intervals is called the {\it barcode} of this persistence module. Here, the associated barcode theory transfers the topological data $V_{\alpha}(F)$ into a combinatorial data, briefly denoted by $\bB_{\alpha}(D^*_{F} M)$. 

Meanwhile, the change of Finsler metrics $F$, reflected in a geometric way via $D^*_FM$, can be measured in terms of {\it symplectic Banach-Mazur distance}, denoted by 
\begin{equation} \label{dfn-SBM}
d_{\rm SBM}(D^*_{F_1} M, D^*_{F_2} M) \,\,\,\,\mbox{for Finsler metrics $F_1, F_2$.}
\end{equation}
 It is a fast-developing concept initially investigated in \cite{SZ21,Ush21}, brought up by Ostrover-Polterovich a decade ago. Roughly speaking, it measures the rescaling-difference between $D^*_{F_1} M$ and $D^*_{F_2}M$ up to the action of ${\rm Symp}_0(T^*M)$. We call  a Liouville embedding $\phi\colon U\ra V$ {\it strongly unknotted} if there exists an isotopy of Liouville embeddings $\{\phi_t\colon U\ra V\}_{t\in[0,1]}$ from the inclusion $\phi_0=i_U$ to $\phi_1=\phi$.
 The definition of symplectic Banach-Mazur distance between two domain $U,V$ goes as follows:
\[
d_{\rm SBM}(U,V)=\inf\left\{\ln C\,\left|\,\begin{aligned}&\exists \frac{1}{C}U\xhookrightarrow{\phi}V\xhookrightarrow{\psi}CU\text{ (and hence $\frac{1}{C}V\xhookrightarrow{\psi(C^{-1})}U\xhookrightarrow{\phi(C)}CV$)}\\
&\text{s.t. }\psi\circ\phi \text{ and } \phi(C)\circ\psi(C^{-1})\text{ are strongly unknotted}
 \end{aligned}\right\}\right. .
\]
Here $CU=\{(q,p)\in T^*M\mid \text{if }(q,p/C)\in U\}$ and $\phi(C)(q,p)=C\phi(q,p/C)$ for any $(q,p)\in U$, where $CV$ and $\psi(C^{-1})$ are defined is a similar way.

The crux towards the proof of Theorem \ref{thm-app-1} is a filtered homological machinery that links $D^*_FM$ the loop space homology barcode $\bB_{\alpha}(D^*_{F} M)$, that is, the (filtered) symplectic homology applied to the Liouville domain $D^*_FM$, denoted by ${\rm SH}_{*, \alpha}(D^*_FM)$. For the version of ${\rm SH_{*, \alpha}}$ and its filtered refinement used in our paper, see Section 4 in \cite{GX20}. In other words, we have the following transformation, 
\[ D^*_FM \xrightarrow{{\rm SH}_{*, \alpha}} {\rm SH}_{*, \alpha}(D^*_FM) \xrightarrow{\small\mbox{filtered}} S_{\alpha}(F) \coloneqq \left\{{\rm SH}^{\eta}_{*, \alpha}(D^*_FM)\right\}_{\eta \in \bR} \]
where $S_{\alpha}(F)$ denotes the persistence module derived from the filtered symplectic homology. Moreover, the main body of \cite{GX20} confirms that the barcode of persistence module $S_{\alpha}(F)$ is precisely $\bB_{\alpha}(D^*_{F} M)$ after a reparametrization of the persistence parameter. This serves as the key step in the proof of Theorem \ref{thm-app-1}.

\begin{remark} The proof of Theorem \ref{thm-app-1} in fact shows a stronger result that the corresponding closed geodesics will ``survive'' (or persist) for the same amount of time, i.e. the length of the intervals in $\bB_{\alpha}(D^*_{F} M)$.

The conclusion that two barcodes are identical is strictly stronger than $\Lambda_{\alpha}^{F_1} = \Lambda_{\alpha}^{F_2}$, since one easily constructs two barcodes with the same collection of endpoints but from different intervals, e.g.~$\mathbb B_1 = \{[1,2), [3,4)\}$ and $\mathbb B_2 = \{[1,4), [2,3)\}$. 

One can ask a more general question that to what extent the coincidence on barcodes $\mathbb B_{\alpha}(D^*_{F_1}M) = \mathbb B_{\alpha}(D^*_{F_2}M)$ determines an isometry between metrics $F_1$ and $F_2$ (cf.~Problem 1 in \cite{GX20}).  \end{remark}

To end this section, we emphasize the following point: instead of Liouville embeddings in the assumption of Theorem \ref{thm-app-1}, one can relax that Finsler metrics $F_1, F_2$ are related by a certain (even small) rescaling up to isometry. This implies that the corresponding Liouville domains $\mathring{D}^*_{F_1}M$ and $\mathring{D}^*_{F_2}M$ are linked by rescaled Liouville embeddings. Then the full marked length spectrum  $\Lambda_{\alpha}^{F_1}$ and $\Lambda_{\alpha}^{F_2}$ become {\it incomparable} by our approach, based on the barcode theory, due to possibly many (shorts) bars only belonging to one of $D^*_{F_1}M, D^*_{F_2}M$. However, the difference between minimal spectra $l^{F_1}_{\alpha}$ and $l^{F_2}_{\alpha}$ can still be controlled by the difference between $F_1$ and $F_2$.

\subsection*{Acknowledgements} We thank Kaicheng Bao and Moqing Chen for their helpful discussions. We are grateful to Wenmin Gong and Jinxin Xue for their insightful comments on the early draft of this paper. Also, we thank anonymous referee for helpful comments, suggestions, and corrections. The first author is partially supported by NSFC No.~123B1024. The second author is partially supported by National Key R\&D Program of China No.~2023YFA1010500, NSFC No.~12301081, NSFC No.~12361141812, and USTC Research Funds of the Double First-Class Initiative.

\section{Proof of Theorem \ref{thm-squeezing}, Theorem \ref{thm-higher} and Theorem~\ref{prop-thin-cylinder}}

\subsection{Proof of Theorem~\ref{thm-squeezing}}
Following the notations defined in Section~\ref{sec-intro}, we have two domains $P^{4}(r), Y^{4}(r,v)$ in $T^*\bT^2$ by
\begin{align*}
P^{4}(r)&\coloneqq\bT^2\times \left\{\left.(x_1, x_2)\in\bR_{\geq 0}^2\right| x_1+x_2\leq r\right\}\\
Y^{4}(r,v)&\coloneqq\bT^2\times (-r,r)v \times v^{\perp}.
\end{align*} 
Here, $v$ is a unit vector in $\bR^2$ and $v^{\perp}$ denotes the line in fiber $\bR^2$ that is perpendicular to $v^{\perp}$. 
Given an irrational vector $v$, which is a scalar  multiple of $(\kappa,1)$ for some irrational number $\kappa$,  we can construct a sequence of rational vectors $\{v_i\}_{i\in\bN}$ to approximate $v$. Due to Dirichlet's approximation theorem,  there exists a sequence of integral pairs $\{(p_i,q_i)\}_{i\in\bN}$ satisfying 
\[
 \left|\frac{p_i}{q_i}-\kappa\right|<\frac{1}{q_i^2} \,\,\,\,\mbox{and}\,\,\,\, \displaystyle\lim_{i\ra+\infty}q_i=+\infty.
 \]
We take $v_i$ as a scalar  multiple of $(p_i,q_i)$.

\medskip

By the discussion in Section \ref{ssec-squeezing}, it suffices to construct a symplectic embedding $\Psi_i\colon P^4\left(\sqrt{p_i^2 + q_i^2}\right) \to Y^4(1+\delta_i, v)$ in (\ref{map-Psi}), for some $\delta_i >0$ that can be uniformly controlled. We will achieve this by two steps. To simplify the notation, the fiber part $\{(x_1,x_2)\in\bR_{\geq 0}^2\mid x_1+x_2\leq r\}$ in the definition of $P^4(r)$ is denoted by $\Delta^2(r)$, similarly $P^{2n}(r) = \bT^n \times \Delta^n(r)$. 

As the first step, we will embed $P^4(\sqrt{p_i^2+q_i^2})$ into $Y^4(1,v_i)$.
By B\'{e}zout's identity, for any coprime $p_i, q_i\in\bZ$, there exist $a_i, c_i\in\bZ$ such that $a_ip_i+c_iq_i=1$ and $|a_i|\leq |q_i|, |c_i|\leq |p_i|$. Take a linear symplectomorphism 
$\psi_{A_i}\colon T^*\bT^2 \rightarrow T^*\bT^2$ defined  as follows, 
\begin{equation} \label{psi-A}
(x, y) \mapsto (A_i^{-1}x, A_i y), \,\,\,\,\text{where }A_i=\left(\begin{matrix}
a_i & -q_i\\
c_i & p_i\\
\end{matrix}\right)\in{\rm SL}_{2}(\bZ).
\end{equation}
Note that $\psi_{A_i}$ maps $P^4(\sqrt{p_i^2+q_i^2})$ into $Y^4(1,v_i)$ as
$$
\max_{x,y\in\Delta^2\left(\sqrt{p_i^2+q_i^2}\right)}\frac{(p_i,q_i)\cdot A_i(x-y)}{\sqrt{p_i^2+q_i^2}}=\max_{x,y\in\Delta^2\left(\sqrt{p_i^2+q_i^2}\right)}\dfrac{x_1-y_1}{\sqrt{p_i^2+q_i^2}}=1.$$

For the second step, denote $\theta_i$ by the angle between $v_i$ and $v$. Observe that the image of $\psi_{A_i}(P^4(\sqrt{p_i^2+q_i^2}))$ from the first step lies in a rectangle region, 
\begin{equation} \label{region}
\psi_{A_i}\left(P^4\left(\sqrt{p_i^2+q_i^2}\right)\right) \subset {\bT^2\times}\left( (-1,1) v_i \times \left(-\frac{\ell_i}{2}, \frac{\ell_i}{2} \right) w_i  \right)
\end{equation}
where $w_i$ is a unit vector perpendicular to $v_i$. Here, $\ell_i$ is defined and estimated by 
\begin{align*}
\frac{\ell_i}{2}&\coloneqq \max_{x\in\Delta^2\left(\sqrt{p_i^2+q_i^2}\right)}|A_ix|\\
& = \sqrt{p_i^2+q_i^2} \cdot \max\left\{\sqrt{a_i^2+c_i^2},\sqrt{p_i^2+q_i^2}\right\}\leq p_i^2+q_i^2.
\end{align*}
Now, choose $\delta_i \coloneqq \ell_i\sin\theta_i+\cos\theta_i-1$. We will always assume that $i$ is sufficiently large so that $\delta_i >0$. Then it is easy to verify that the parallelogram region in (\ref{region}) lies inside the intersection $Y^4(1, v_i) \cap Y^4(1+\delta_i,v)$, up to a shift in the fiber (which is also a symplectomorphism of $T^* \bT^2$). Then the desired $\Psi_i$ is given by the composition $\psi_{A_i}$ with a trivial inclusion $Y^4(1, v_i) \cap Y^4(1+\delta_i,v) \to Y^4(1+\delta_i, v)$ (plus a shift). 

What is left is to confirm that $\delta_i$ can be controlled in a uniform way. Indeed, when $i$ is sufficiently large (hence, $\theta_i$ is sufficiently small), we have $\sin\theta_i<\theta_i$, which implies that $\delta_i<\ell_i\theta_i$. Moreover, denote $\beta$ (resp.~$\beta_i$) the angle between the line in direction $v$ (resp.~$v_i$) and the axis $\bR \times \{0\}$. Also, recall that the irrational vector given in the hypothesis is a scalar multiple of the vector $(1, \kappa)$ for some irrational number $\kappa$. Then for small $\theta_i$, by the approximation (\ref{dirichlet}), we have
$$
\tan\theta_i=|\tan(\beta_i-\beta)|=\dfrac{|\tan\beta_i-\tan\beta|}{1+\tan\beta_i\tan\beta}\leq\dfrac{1/q_i^2}{1+\kappa(\kappa-1/q_i^2)}\leq\dfrac{1}{q_i^2}.$$
Together with the estimation of $\ell_i$ as above, this implies that 
\begin{equation} \label{est-delta}
\delta_i < 2(p_i^2 + q_i^2) \cdot \tan \theta_i \leq 2 \left( \frac{p_i^2 + q_i^2}{q_i^2} \right) \leq 2 \cdot (2\kappa^2 + 1) 
\end{equation}
when $i$ is sufficiently large, since by our choice of $(p_i, q_i)$, we have $\frac{p_i}{q_i} \to \kappa$ when $i \to \infty$. The upper bound $2 \cdot (2\kappa^2 + 1)$ is independent of $i$, so we complete the proof by considering a rescaled $\Psi_i$ as argued right above Remark \ref{rmk-1}. 

\subsection{Proof of Theorem \ref{thm-higher}} \label{subsec-proof-B}
We continue to approximate the irrational vector $v$ using the rational vector $v_i$ (which will be specified later). Now we consider a higher dimensional version of Dirichlet's approximation theorem (see Theorem 1A in Section II in \cite{Sch80}), which states that given real numbers $\kappa_1,\cdots ,  \kappa_{n}$ and $Q \in \bN_{\geq 2}$, there are integers $q, p_{1},\cdots ,p_{n}\in \mathbb {Z} $, such that $1\leq q < Q^{n}$ and 
\begin{equation} \label{high-dim-d}
 \left|\kappa_{i}q-p_i\right|\leq \frac{1}{Q}\,\,\,\,\mbox{for $i=1,\cdots,n$.}
 \end{equation} 
Note that when $n =2$, the approximation as in (\ref{dirichlet}) is a more accurate version. Therefore, here we apply the approximation theorem in (\ref{high-dim-d}) only for $n \geq 3$.

Since $v$ is irrational, by definition we assume that $v$ is the scalar multiple of $(\kappa_1,\cdots, \kappa_{n})$ with $\kappa_1=1$ and there exists an irrational number within $\{\kappa_2, \cdots, \kappa_n\}$. Without loss of generality, assume $\kappa_2$ is irrational.  

Now, considering a sequence of integers in $\bN_{\geq 2}$, denoted by $\{Q_i\}_{i \in \bN}$, with $Q_i \to \infty$ as $i \to \infty$.  Applying (\ref{high-dim-d}) for each $Q_i$, we get a sequence of tuples $\{(q_i, p_{i,1},\cdots,p_{i,n})\}_{i \in \bN}$ with $1 \leq q_i < Q_i^{n}$ and 
\begin{equation} \label{high-dim-d-2}
|\kappa_j q_i-p_{i,j}|\leq\frac{1}{Q_i}\,\,\,\,\mbox{for $j=1,\cdots,n$.}
\end{equation}
In particular, since $\kappa_1 =1$, the relation in (\ref{high-dim-d-2}) for $j=1$, that is, $|q_i - p_{i,1}| \leq  \frac{1}{Q_i} (<1)$ implies that $q_i = p_{i,1}$. Therefore, (\ref{high-dim-d-2}) rewrites as $|\kappa_j p_{i,1} -p_{i,j}|\leq\frac{1}{Q_i}$ and we only need to consider the $n$-tuples $\{(p_{i,1},\cdots,p_{i,n})\}_{i \in \bN}$. 

Here are two basic observations. One, we can reduce each $n$-tuple $(p_{i,1},\cdots,p_{i,n})$ as a tuple of coprime elements (i.e., ${\rm g.c.d.}(p_{i,1},\cdots,p_{i,n}) =1$) , as this does not affect the accuracy of the estimate. Two, since $\kappa_2$ is irrational, the sequence $\{p_{i,2}\}_{i\in\bN}$ from the sequence of $n$-tuples above is unbounded.  

\medskip

Next, define a unit vector $v_i$ as a proper scalar multiple of $(p_{i,1},\cdots,p_{i,n})$, for $i \in \bN$. We claim that $v_i$ as rational vectors approximate $v$. In fact, let $\theta_i$ be the angle between $v_i$ and $v$, then the arclength $\theta_i$ can be controlled by the chord length between unit vectors $v_i$ and  $v$ in the following way, 
\begin{equation} \label{chord-l}
\theta_i\leq 2|v_i-v|=\frac{2}{r_i\kappa}\left|(\kappa_1r_i-\kappa p_{i,1},\cdots, \kappa_{n}r_i-\kappa p_{i,n})\right|
\end{equation}
where $\kappa=\sqrt{\sum_{j=1}^{n}\kappa^2_j}$ and $r_i=\sqrt{\sum_{j=1}^n p_{i,j}^2}$. For any $j=1,\cdots,n$ and sufficiently large $i$, we have the following estimate, 
\begin{align*}
|\kappa_jr_i-\kappa p_{i,j}| & =\left|\dfrac{(\kappa_jp_{i,1}-p_{i,j})r_i+p_{i,j}(r_i-\kappa p_{i,1})}{p_{i,1}}\right| \\
& \leq \frac{r_i}{|p_{i,1}|Q_i} + |p_{i,j}| \left|\sqrt{\sum_{j=1}^n\kappa_j^2}-\sqrt{\sum_{j=1}^n\left(\frac{p_{i,j}}{p_{i,1}}\right)^2}\right|.
\end{align*}
Denote vectors $w_1\coloneqq (\kappa_1, \cdots, \kappa_2)$ and $w_2\coloneqq (\frac{p_{i,1}}{p_{i,1}}, \cdots, \frac{p_{i,n}}{p_{i,1}})$. Then the triangle inequality $||w_1|- |w_2||\leq |w_1-w_2|$ implies that that second term above can be estimated as follows, 
\begin{align*}
|p_{i,j}|\left|\sqrt{\sum_{j=1}^n\kappa_j^2}-\sqrt{\sum_{j=1}^n\left(\frac{p_{i,j}}{p_{i,1}}\right)^2}\right| &\leq |p_{i,j}| \sqrt{\sum_{j=1}^n \left(\kappa_j - \frac{p_{i,j}}{p_{i,1}}\right)^2} \\
& \leq  { |p_{i,j}| \cdot \frac{\sqrt{n}}{Q_i |p_{i,1}| } }\leq \frac{\sqrt{n} \cdot 2|\kappa_j|}{Q_i}
\end{align*}
where the last inequality holds (for sufficiently large $i$) since $Q_i \to \infty$ and we can choose $i$ such that $\frac{1}{Q_i |p_{i,1}|} <|\kappa_j|${, then $|\frac{p_{i,j}}{p_{i,1}}|\leq |\kappa_j|+\frac{1}{Q_i |p_{i,1}|}\leq 2|\kappa_j|$}   .  A similar estimation works for the first term $\frac{r_i}{|p_{i,1}| Q_i}$. Therefore, we get 
\[ |\kappa_jr_i-\kappa p_{i,j}| \leq \frac{2 \kappa + \sqrt{n} \cdot 2|\kappa_j|}{Q_i} \leq \frac{4\sqrt{n} \cdot \kappa}{Q_i}\]
which, back to (\ref{chord-l}), implies that
\begin{equation} \label{est-chord-l}
\theta_i \leq \frac{4 \sqrt{n}}{Q_i} \cdot \kappa \cdot \sqrt{n} \cdot \frac{2}{r_i \kappa} = \frac{8n}{Q_i r_i}. 
\end{equation}
Again, since $Q_i \to \infty$ as $i \to \infty$, we confirm that $v_i$ approximates $v$. 

\medskip

In what follows, we will imitate the construction in the proof of Theorem \ref{thm-squeezing}. To this end, the following proposition is the key step. 
\begin{prop}\label{prop-key}
For any coprime $(p_1,\cdots,p_n)$, there exist $A=(a_{ij})_{1\leq i,j\leq n}\in{\rm SL}_{n}(\bZ)$ with $(p_1,\cdots,p_n)A=(1,0,\cdots,0)$ and $|a_{ij}|\leq C(n)\sqrt{\sum_{k=1}^n p_k^2}$, where $C(n)$ is a constant only depending on $n$.
\end{prop}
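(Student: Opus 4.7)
The plan is to construct $A$ column-by-column: take the first column to be any $u \in \bZ^n$ with $p \cdot u = 1$ (which exists by B\'ezout since $\gcd(p_1, \ldots, p_n) = 1$), and take the remaining $n - 1$ columns to form a $\bZ$-basis of the sublattice
\[ L \coloneqq \{v \in \bZ^n : p \cdot v = 0\}. \]
Any such choice gives $(p_1, \ldots, p_n) A = (1, 0, \ldots, 0)$ automatically, and the decomposition $\bZ^n = \bZ u \oplus L$ (available for any B\'ezout lift $u$) forces $\det A = \pm 1$, which one corrects to $+1$ by flipping the sign of a single column if needed. The entire content of the proposition is therefore the size bound, and I would reduce it to an $L^2$-norm bound on each column via the geometry of numbers.

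Write $N \coloneqq \sqrt{\sum_k p_k^2}$. Since $p \cdot \colon \bZ^n \to \bZ$ is surjective by coprimality, the short exact sequence $0 \to L \to \bZ^n \to \bZ \to 0$, combined with the observation that the orthogonal projection $\bZ^n \to \bR p$ has spacing exactly $1/N$, shows that $L$ has covolume $N$ inside the hyperplane $p^\perp$. Minkowski's second theorem then bounds the successive minima of $L$ by
\[ \lambda_1(L) \cdots \lambda_{n-1}(L) \leq c_n N, \]
and integrality $L \subset \bZ^n$ gives $\lambda_1(L) \geq 1$, whence $\lambda_{n-1}(L) \leq c_n N$. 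Applying an LLL- or Minkowski-reduction inside $p^\perp$ then produces a genuine $\bZ$-basis $v_1, \ldots, v_{n-1}$ of $L$ satisfying $\|v_j\|_2 \leq C_1(n)\, \lambda_j(L) \leq C_1(n) c_n N$ for every $j$.

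For the first column, I would start with any $u_0 \in \bZ^n$ with $p \cdot u_0 = 1$, decompose $u_0 = N^{-2} p + u_0^\flat$ with $u_0^\flat \in p^\perp$, and round the $v_j$-coordinates of $u_0^\flat$ to the nearest integers to produce $\ell \in L$ with
\[ \|u_0^\flat - \ell\|_2 \leq \tfrac{1}{2}\sum_{j=1}^{n-1} \|v_j\|_2 \leq C_2(n) N. \]
The shifted vector $u \coloneqq u_0 - \ell \in \bZ^n$ still satisfies $p \cdot u = 1$ and obeys $\|u\|_2^2 = \|u_0^\flat - \ell\|_2^2 + N^{-2} \leq C_2(n)^2 N^2 + N^{-2}$. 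Consequently every column of $A \coloneqq (u \mid v_1 \mid \cdots \mid v_{n-1})$ has Euclidean norm at most $C(n) N$, and in particular each entry $|a_{ij}| \leq C(n) \sqrt{\sum_{k=1}^n p_k^2}$, as desired.

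The main obstacle is the step passing from Minkowski's \emph{product} bound on successive minima to \emph{individual} bounds on the members of a $\bZ$-basis of $L$: starting at rank $\geq 5$ the successive minima need not be simultaneously realized by a basis, so one must invoke the LLL inequality $\|v_j\|_2 \leq 2^{(n-2)/2}\lambda_j(L)$ (or Mahler's theorem on the existence of a basis comparable to the successive minima) to lose only a dimension-dependent multiplicative constant. The crucial input that keeps the final bound linear in $N$ rather than a higher power is the integrality $\lambda_1(L) \geq 1$, which is available precisely because $L$ sits inside $\bZ^n$.
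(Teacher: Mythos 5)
Your proof is correct, but it takes a genuinely different route from the paper's. The paper argues entirely by explicit matrix manipulations: it builds a matrix $B'$ (and hence $B=(B')^{-1}$) through an inductive application of B\'ezout's identity, obtaining a matrix with the required first-row property but uncontrolled entries, and then compresses the entries by right-multiplying by an explicit unimodular matrix $T$ whose off-diagonal entries are chosen by a careful rounding/reduction argument (conditions (iii) and (iv) in the paper), yielding the explicit constant $C(n)=2^{n-2}$. You instead package the same two tasks---(a) find a short $\bZ$-basis of $L=\bZ^n\cap p^\perp$ and (b) find a short B\'ezout lift $u$ with $p\cdot u=1$---using the geometry of numbers: the covolume of $L$ in $p^\perp$ equals $\|p\|_2$, Minkowski's second theorem plus the integrality lower bound $\lambda_1\geq1$ (hence $\lambda_i\geq1$ for all $i$, since the minima are nondecreasing) gives $\lambda_{n-1}\lesssim_n \|p\|_2$, and a Mahler- or LLL-reduced basis realizes these bounds up to a dimensional factor; your nearest-integer rounding of $u_0^\flat$ in $v_j$-coordinates plays exactly the role of the paper's condition (iv). Your argument is shorter and more conceptual, at the price of invoking Minkowski's second theorem and a reduced-basis theorem as black boxes and losing track of the explicit constant; the paper's argument is longer but self-contained and produces $C(n)=2^{n-2}$ concretely. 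Both give what the proposition asks for. One small note: you should also say explicitly that the modified lift $u=u_0-\ell$ still satisfies $\bZ u\oplus L=\bZ^n$ (which is automatic for any $u$ with $p\cdot u=1$, since $w\mapsto w-(p\cdot w)u$ lands in $L$), so the determinant argument is unaffected by the shift.
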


The conclusion of Proposition \ref{prop-key} is similar to the one of Siegal's Lemma, which says there is an {\it integer} vector $L=(b_1,\cdots,b_n)\in\bZ^n$ such that $(p_1,\cdots,p_n)L^{\rm T}=0$ and $|b_i|\leq (\sum_{k=1}^np_k)^\frac{1}{2(n-1)}$ (see Theorem~1 in \cite{BV83}). Here, $L$ can be viewed as a column of $A$ in Proposition \ref{prop-key}. The essential difference here is that our requested matrix $A$ should also lie in ${\rm SL}_n(\bZ)$, which posts extra requirements on its entries. 

Despite of the similarity to Siegal's Lemma, Proposition \ref{prop-key} can be proved with a rather elementary approach in the sense that we can construct $A$ directly. It goes as follows: since $(p_1,\cdots,p_n)$ is coprime, we can find a matrix $B$ satisfying $(p_1,\cdots, p_n)B=(1,0,\cdots,0)$ using B\'{e}zout's identity; adjust $B$ by adding columns that does not change $(p_1,\cdots, p_n)B$, which enables us to control the entries of the matrix. Let us demonstrate this process in the case $n=3$ in the next paragraph and the proof for general cases follows a similar approach.

Given the 3-tuple $(p_1, p_2, p_3)$, let $d = {\rm g.c.d.}(p_1, p_2)$. We can express $p_1$ and $p_2$ as $p_1 = db_1$ and $p_2 = db_2$, where $b_1$ and $b_2$ are coprime. This implies that there exist $m_1, m_2 \in \bZ$ such that $b_1 m_1 + b_2 m_2 =1$.  Since ${\rm g.c.d.}(p_1, p_2, p_3)=1$, it follows that ${\rm g.c.d.}(d, p_3) = 1$, Therefore, there exist $t_1, m_3 \in \bZ$ such that $dt_1 + p_3 m_3 = 1$.We can construct the following matrix:
\begin{equation} \label{m-B}
B=\left(\begin{matrix}
m_1t_1 &  -b_2&-m_1p_3  \\
m_2t_1& b_1 & -m_2p_3  \\
m_{3}&0 &d \\
\end{matrix}\right).
\end{equation}
which satisfies  $(p_1,p_2,p_3)B = (1,0,0)$. Let $L_i$ denote the $i$-th column of $B$ and $p$ be the maximum of $p_1,p_2,p_3$. We take $c_1=-\left\lfloor\frac{m_2p_3}{b_1}\right\rfloor$, then we have
\[
L_3-c_1L_2=\frac{1}{b_1}(-p_3,0,p_1)^{\rm T}+\left(\left\lfloor\dfrac{m_2p_3}{b_1}\right\rfloor-\dfrac{m_2p_3}{b_1}\right)L_2\in [-2p, 2p].
\]
We can find $c_2\in\bZ$ such that the third component of $L_1-c_2L_3$ is in $[-d,d]$. Similarly, we can find  $c_3\in\bZ$ such that $L_1-c_2L_3-c_3L_2\in[-2p,2p]$. Thus, define $A\coloneqq(a_{ij})_{1\leq i,j\leq 3}=BT$, where
\[
T\coloneqq\left(\begin{matrix}
1&0&0\\
-c_2&1&-c_1\\
-c_3&0&1\\
\end{matrix}\right).
\]
This gives us $|a_{i,j}|\leq 2p$ as requested. 

\medskip

Back to the proof of Theorem \ref{thm-higher}, applying Proposition \ref{prop-key} to our $(p_{i,1}, \cdots, p_{i,n})$ above, we obtain a sequence of matrices $A_i\in{\rm SL}_n(\bZ)$ with $(p_{i,1},\cdots,p_{i,n})A_i=(1,0,\cdots,0)$ and $|(A_i)_{jk}|\leq C(n)r_i$, where $r_i=\sqrt{\sum_{j=1}^np_{i,j}^2}$. Again, consider the symplectic map $\phi_{A_i}$ on $T^*\bT^n$ as in (\ref{psi-A}), that is, 
\begin{equation} \label{phi-A}
(x, y) \mapsto (A_i^{-1}x, A_i y), \,\,\,\,\text{where $A_i \in {\rm SL}_n(\bZ)$ obtained above.}
\end{equation}
It is easy to verify that $\phi_{A_i}$ maps $P^{2n}(r_i) = \bT^n \times \Delta^n(r_i)$ into $Y^{2n}(1,v_i)$. 
In fact, the map $\phi_{A_i}$ above embeds $\Delta^n(r_i)$ into a region in $Y(1, v_i)$, defined by $Z^{n}(\ell_i,v_i)\coloneqq\{x_1v_i+x_2 w\mid -1\leq x_1,x_2\leq 1,\, w\cdot v_i=0, |w|\leq \ell_i\}$. Here, one should regard $Z^n(\ell_i, v_i)$ as the higher-dimensional generalization of the rectangle region in (\ref{region}); moreover, as before $\ell_i\coloneqq 2 \max_{x\in\Delta^n(r_i)}|A_ix|$. By our construction of $A_i$, one controls $\ell_i$ by the following estimation, 
\begin{equation} \label{est-ell}
\ell_i \leq 2 \sum_{j,k=1}^nr_i|(A_i)_{j,k}|\leq 2C(n)n^2r_i^2.
\end{equation}
Meanwhile, up to a shift, $\bT^n\times Z^{n}(\ell_i,v_i)$ is included inside $Y^{2n}(1+\delta_i,v)$, where $\delta_i \coloneqq \ell_i\sin\theta_i +\cos\theta_i-1$. Indeed, 
\begin{equation} \label{width-control}
\max_{w_1,w_2\in Z^{n}(\ell_i,v_i)}(w_1-w_2)\cdot v= 2\left(v_i\cdot v+\max_{w\cdot v_i=1, |w| =2 \ell_i} w \cdot v\right) =  2(1+\delta_i).
\end{equation}
Therefore, obtain a sequence of symplectic embedding $\Phi_{A_i}\colon P^{2n}(r_i) \rightarrow Y(1+\delta_i, v)$ by composing $\phi_{A_i}$ in (\ref{phi-A}) with the shifts. 

Similarly to the 2-dimensional case, what is left is to control $\delta_i$ so that the rescaling $P^{2n}(\frac{r_i}{1+\delta_i})$ admits $\frac{r_i}{1+\delta_i} \to \infty$ as $i \to \infty$. When $i$ is sufficiently large, we have $\delta_i \leq \ell_i \theta_i$. Then 
\begin{equation} \label{control-delta}
\delta_i  \leq \ell_i \theta_i \leq 2C(n)n^2 r_i^2 \cdot \frac{8n}{Q_i r_i} = C'(n) \frac{r_i}{Q_i}.
\end{equation}
Here $C'(n)$ represents possibly another constant, still only depending on $n$. Then we get the final estimation, 
\begin{equation} \label{est-res}
\frac{r_i}{1+\delta_i} \geq \frac{r_i}{1+C'(n) \frac{r_i}{Q_i}} = \frac{1}{\frac{1}{r_i} + \frac{C'(n)}{Q_i}} \to \infty
\end{equation}
since $r_i \to \infty$ as $i \to \infty$ (which is eventually due to the fact that $p_{i,2} \to \infty$ since $\kappa_2$ is irrational). 


\begin{remark} Here, we point out the essential but technical difference between the proofs in high-dimensional case and the 2-dimension case. In (\ref{est-delta}), the upper bound of $\delta_i$ is a constant, while in (\ref{control-delta}), the upper bound may change when $i \to \infty$ (hence, the estimation on the rescaling $\frac{r_i}{1+\delta_i}$ as in (\ref{est-res}) becomes less obvious). \end{remark}

\subsection{Proof of Theorem~\ref{prop-thin-cylinder}}\label{subsec-dis}
Now we focus on the $6$-dimensional $(n=3)$ case (see Figure \ref{fig_YX}), as the cross product in $\bR^3$ will be essentially used. To embed $P^6(r)$ into the thin cylinder $X^6(1,w)$, we aim to construct a sequence of matrices $\{A_i\in{\rm SL}_3(\bZ)\}_{i=1}^\infty$ such that 
\[
\Psi_{A_i}\colon P^6(r_i)\ra X^6(1,w)\quad\text{where }r_i\ra\infty.
\]
Similarly to (\ref{psi-A}), the embedding $\Psi_{A_i}$ is obtained from the restriction of a linear symplectomorphism $\psi_{A_i}\colon T^*\bT^3\ra T^*\bT^3$, which is defined as follows,
\begin{equation} \label{3-dim-psi}
(x,y)\mapsto (A_i^{-1}x, A_i y),\quad\text{where }A_i\in{\rm SL}_{3}(\bZ).
\end{equation}
The following definition is needed. 

\begin{dfn} \label{dfn-good}
A non-zero irrational vector $v=(v_1, v_2, v_3)\in\bR^3$ admits a {\rm biased approximation} if there exist a diverging sequence of positive scalars $\{Q_i\}_{i \in \bN}$ and a sequence of coprime triples $\{(p_{i,1}, p_{i,2}, p_{i,3})\}_{i=1}^\infty$ such that 
\begin{equation}\label{eq-app}
\left|\frac{p_{i,1}}{p_{i,3}}-\frac{v_1}{v_3}\right|<\frac{1}{|p_{i,3}|Q_i} \,\,\,\mbox{and}\,\,\,\, \left|\frac{p_{i,2}}{p_{i,3}}-\frac{v_2}{v_3}\right|<\frac{1}{|p_{i,3}|Q_i}
\end{equation}
with $p_{i,3} \to \infty$ as $i \to \infty$, and there exists a constant $C>0$ (independent of coprime triples above) such that 
\begin{equation}\label{eq-good}
|p_{i,1}v_2-p_{i,2}v_1|<\min\left\{\frac{C}{Q_i}|p_{i,2}v_3-p_{i,3}v_2|, \frac{C}{Q_i}|p_{i,1}v_3-p_{i,3}v_1|\right\}
\end{equation}
for any $i \in \bN$. 
\end{dfn}

\begin{ex}\label{ex-bad}
We call $(\alpha_1, \alpha_2)$ a badly approximation $2$-tuple (see Section~\uppercase\expandafter{\romannumeral2}.4 in \cite{Sch80}), if there exists a constant $\gamma>0$,  such that for any integers $q>0$, $q_1, q_2$, we have
\[
\max\left\{\left|\alpha_1-\frac{p_1}{q}\right|, \left|\alpha_2-\frac{p_2}{q}\right|\right\}>\frac{\gamma}{q^{3/2}}.
\]
If the vector $v$ is a multiple of $(\alpha_1, \alpha_2, 1)$, where $(\alpha_1 ,\alpha_2)$ is a badly approximation $2$-tuple, then $v$ admits a biased approximation. 

In order to verify this, we consider the following two subsets
\begin{align*}
S=\left\{(p_1,p_2,p_3)\in\bR^3\,\left|\,\max\left\{|p_1-p_3\alpha_1|, |p_2-p_3\alpha_2|\right\}<\frac{1}{Q}, \right.\right.\\
\left. |p_1\alpha_2-p_2\alpha_1|<\frac{1}{Q}\min\{|p_2-p_3\alpha_2|,|p_1-p_3 \alpha_1|\}\right\}
\end{align*}
and 
\[
S'=\left\{(p_1,p_2,p_3)\in\bR^3\,\left|\,|p_1-p_3\alpha_1|<\frac{1}{2Q}, |p_1\alpha_2-p_2\alpha_1|<\frac{1}{Q}\cdot\frac{\gamma}{p_3^{\frac{1}{2}}}\right\}\right. 
\]
where $\gamma$ is the constant from the definition of badly approximation.
The integer points in $S'$ are also in $S$ as $(\alpha_1 ,\alpha_2)$ is a badly approximation $2$-tuple. Now we consider
\[
S_N=S'\cap \left\{(p_1,p_2,p_3)\in\bR^3\,\left|\, \max\{|p_1|, |p_2|, |p_3|\}\leq N, |p_1\alpha_2-p_2\alpha_1|<\frac{1}{Q}\cdot\frac{\gamma}{N^{\frac{1}{2}}}\right\}\right.
\]
whose volume is
\begin{align*}
{\rm vol}(S_N)=\int_{S_N}dp_1dp_ 2dp_3\geq &\int_{-N}^Ndp_3\int_{p_3\alpha_1+\frac{1}{2Q}}^{p_3\alpha_1-\frac{1}{2Q}}dp_1\int_{p_1\frac{\alpha_2}{\alpha_1}-\frac{1}{Q}\cdot\frac{\gamma}{N^{1/2}}}^{p_1\frac{\alpha_2}{\alpha_1}+\frac{1}{Q}\cdot\frac{\gamma}{N^{1/2}}}dp_2\\
\geq& \int_{-\min\{N-1,N/\alpha_1-1,N\alpha_2-1\}}^{\min\{N-1,N/\alpha_1-1,N\alpha_2-1\}}\frac{1}{Q}\cdot\frac{2\gamma}{QN^{\frac{1}{2}}}dp_3\\
\geq & \left.\frac{4\gamma}{Q^2N^{\frac{1}{2}}}\right|_{-\min\{N-1,N/\alpha_1-1,N\alpha_2-1\}}^{\min\{N-1,N/\alpha_1-1,N\alpha_2-1\}}.
\end{align*}
Then for any positive $Q$, there exists a large $N$ such that ${\rm vol}(S_N)\geq 8$. It is easy to check $S_N$ is convex and symmetric about $0$, so we can apply Minkowski's convex body theorem to $S_N$ (see Theorem~2B of Chapter \uppercase\expandafter{\romannumeral2} in \cite{Sch80}) and obtain that there exists at least one non-zero integer point in $S_N$. Since the integer points in $S'$ are also contained in $S$ as $(\alpha_1 ,\alpha_2)$ is a badly approximation $2$-tuple, $S$ contains at least one nonzero integer point for any positive $Q$. Thus, we can obtain the sequence satisfies condition \eqref{eq-app} and \eqref{eq-good}.
\end{ex}
\begin{remark}\label{rmk-bad}
By Theorem~4A of Chapter \uppercase\expandafter{\romannumeral2} and Theorem~5B of Chapter \uppercase\expandafter{\romannumeral4} in \cite{Sch80}, if $1, \alpha_1 ,\alpha_2$ is a basis of a real algebraic number field of degree $3$, then $(\alpha_1 ,\alpha_2)$ is a badly approximation $2$-tuple.
\end{remark}

For any irrational vector $v$, a higher dimensional version of Dirichlet's approximation (as in the proof of Theorem \ref{thm-higher}) ensures that there exists a sequence of coprime triples  $\{(p_{i,1}, p_{i,2}, p_{i,3})\}_{i=1}^\infty$ satisfying the condition \eqref{eq-app} in Definition \ref{dfn-good}, which means the cross products $\{v\times (p_{i,1}, p_{i,2}, p_{i,3})\}_{i\in\bN}$ approximate  $0$. Meanwhile, condition \eqref{eq-good} in Definition \ref{dfn-good} means that for a fixed position, the corresponding components in the cross products $v\times (p_{i,1}, p_{i,2}, p_{i,3})$ are always smaller than the other two components. This second condition is more delicate than the mere existence of this sequence of coprime triples. 

\begin{proof}[Proof of Theorem~\ref{prop-thin-cylinder}]
To embed $P^{6}(r)$ into $X^6(1,w)$ via $\psi_{A}$ in (\ref{3-dim-psi}), we need to show that the projections of $A(1,0,0)^{\rm T}$, $A(0,1,0)^{\rm T}$ and $A(0,0,1)^{\rm T}$ into $w^\perp$ are in the circle with radius $\frac{1}{r}$. We can modify the entries of $A$ by adding columns to each other in a certain way, which is similar to the proof of Proposition~\ref{prop-key}. So we only need to show there exists a matrix $A\in{\rm SL}_3(\bZ)$ such that the projection of $A(1,0,0)^{\rm T}$ is in the circle with radius $\frac{1}{\sqrt{2}r}$ and the projection of $A(0,1,0)^{\rm T}$ along $A(1,0,0)^{\rm T}\times w$ is less than $\frac{1}{\sqrt{2} r}$.   Recall that we can construct a matrix for the coprime triple $(p_1, p_2, p_3)$ in Section~\ref{subsec-proof-B}:
\begin{equation}
B' = \left(\begin{matrix}
p_1 & p_2 & p_3 \\
-m_2 & m_1 & 0\\
-m_3b_1 & -m_3b_2 & t_1
\end{matrix}\right)\in{\rm SL}_3(\bZ)
\end{equation}
where ${\rm g.c.d.}(p_1, p_2)=d$, $p_1=db_1$, $p_2=db_2$, $b_1m_1+b_2m_2=1$ and $dt_1+p_3m_3=1$. We can modify this matrix to
\[
B''= \left(\begin{matrix}
p_1 & p_2 & p_3 \\
-m_2-km_3b_1 & m_1-km_3b_2 & kt_1\\
-m_3b_1 & -m_3b_2 & t_1
\end{matrix}\right)\in{\rm SL}_3(\bZ)
\]
where $k$ is an integer which will be determined later.
Denote $w:=(w_1, w_2, w_3)^{\rm T}$ and $a: =(p_1,p_2,p_3)^{\rm T}\times w$. Now assume $(p_1,p_2,p_3)$ satisfies the inequalities in \eqref{eq-app} for some $Q>0$ and the inequality in \eqref{eq-good} for some $C>0$ and $\epsilon>0$. Then the projection of $(B'')^{\rm T}(1,0,0)^{\rm T}$ is in the circle with radius 
\begin{equation}\label{eq-proj-a}
|a|=|(p_2w_3-p_3w_2, p_3w_1-p_1w_3, p_1w_2-p_2w_1)|\leq \sqrt{\frac{2w_3^2}{Q^2}+\frac{C^ 2 w_3^2 }{Q^4}}. 
\end{equation}
The projection of $(B'')^{\rm T}(0,1,0)^{\rm T}$ along $(B'')^{\rm T}(1,0,0)^{\rm T}\times w$ is 
\[
\frac{a\cdot \left( (B'')^{\rm T}(0,1,0)^{\rm T}\right)}{|a|}=\frac{1}{|a|}\left(-dw_3+p_3(m_1w_1+m_2w_2)+k(b_1w_2-b_2w_1)\right).
\] 
We can choose $k\in\bZ$ such that $|-dw_3+p_3(m_1w_1+m_2w_2)+k(b_1w_2-b_2w_1)|\leq |b_1w_2-b_2w_1|$. Then 
\begin{equation}\label{eq-proj-b}
\begin{split}
\frac{a\cdot \left( (B'')^{\rm T}(0,1,0)^{\rm T}\right)}{|a|}&\leq \frac{|b_1w_2-b_2w_1|}{\max\{|p_2w_3-p_3w_2|, |p_1w_3-p_3w_1|\}} \\
&\leq\frac{C}{Q}.
\end{split}
\end{equation}
Since $w$ admits a biased approximation, there exists a sequence of coprime triples $\{(p_{i,1},p_{i,2},p_{i,3})\}_{i=1}^\infty$ where the argument above applies. In particular, for any $r>0$, there is a sufficiently large $Q_i$ such that \[ \max\left\{ \sqrt{\frac{2w_3^2}{Q^2}+\frac{C^ 2 w_3^2 }{Q_i^4}}, \,\frac{C}{Q_i}\right\} <\frac{1}{\sqrt{2}r}.\]
This completes the proof. 
\end{proof}

\section{Barcode Proof}\label{sec-barcode}
The proof of Theorem \ref{thm-app-1} is based on persistence module theory as well as a stability result relating symplectic homologies and persistence modules. This was established in details in \cite{SZ21}. Here, let us just briefly recall the necessary ingredients.

Fix the ground field $\k = \bZ_2$. A persistence $\k$-module $(\mathbb V, \pi)$ is a $\bR$-parametrized family of vector spaces together with a family of $\k$-linear maps $(\{V_s\}_{s \in \bR}, \{\pi_{s,t}\colon V_s \to V_t\}_{s \leq t})$, where (i) for $s \in \bR$, $\dim_{k} V_s < +\infty$ and (ii) for $r \leq s \leq t$, $\pi_{r,t}  = \pi_{s,t} \circ \pi_{r,s}$. A decomposition theorem (cf.~Theorem 1.1 in \cite{Cr15}) says that each such persistence $\k$-module can be uniquely decomposed into the form $\bV = \bigoplus_{I \in \mathcal I_{\mathbb V}} \k_{I}$, where $I$ is some interval of $\bR$, and $\k_{I}$ is the ``interval-type'' persistence $\k$-module such that $(\k_I)_s = \k$ only if $s \in I$ and $\pi_{s,t} = {\mathds 1}_{\k}$ only if both $s, t \in I$ (and $0$ otherwise). The standard examples of persistence $\k$-modules in symplectic geometry are formed by filtered Hamiltonian Floer homologies, filtered symplectic homologies ${\rm SH}^{\eta}_*(U)$ of a Liouville domain $U$ or filtered loop space homologies ${\rm H}_*(\Lambda^{\lambda} M)$, etc. See a detailed survey \cite{PRSZ20} for the constructions of these persistence $\k$-modules.

Due to the uniqueness of this decomposition, any persistence $\k$-module $\mathbb V$ can be completely characterized by $\mathcal I_{\mathbb V}= \{I \,| \, \mbox{interval $I$ in the decomposition of $\mathbb V$}\}$. Denote by $\mathbb B(\bV)$ this collection of intervals, and we call it {\it the barcode} of $\mathbb V$. Note that there might exist one or more infinite-length intervals $I \in \mathbb B(\mathbb V)$. Whenever the left endpoint of such $I$ is finite, it usually admits some meaningful explanation in terms of dynamics. For instance, for a Finsler manifold $(M, F)$, consider $\mathbb V \coloneqq (\{{\rm H}_*(\Lambda^{\lambda} M)\}_{\lambda \in \bR}, \{\iota_{\lambda, \eta}\colon {\rm H}_*(\Lambda^{\lambda} M) \to {\rm H}_*(\Lambda^{\eta} M)\}_{\lambda \leq \eta}\})$, or with a constraint of a homotopy class $\alpha$. The left endpoint of such $I$ is the spectrum of a closed geodesic. In particular, the {\it most left} endpoint of the infinite-length intervals in $\mathbb B(\mathbb V)$ is exactly the minimal spectrum $l^{F}_{\alpha}$ whenever a homotopy class $\alpha$ is fixed. Similarly, the collection of endpoints of all intervals in this barcode is exactly $\Lambda^F_{\alpha}$, the marked length spectrum with respect to $F$ and $\alpha$.

The isometry theorem in persistence module theory enables us to compare two persistence $\k$-modules $\mathbb V$ and $\mathbb W$ via a combinatorial computation between their corresponding barcodes $\mathbb B(\mathbb V)$ and $\mathbb B(\mathbb W)$. This quantitative distance between two barcodes is called {\it bottleneck distance}, denoted by $d_{\rm bot}(\mathbb B(\mathbb V), \mathbb B(\mathbb W))$. For details, see Theorem 3.5 in \cite{BL14}. Here, let us state the following stability theorem, which demonstrates how $d_{\rm bot}$ obstructs the embeddings of Liouville domains.

\begin{theorem}  [Theorem 1.6 in \cite{SZ21}]\label{thm-app-2}
	Let $M$ be a closed manifold, $U, V$ be two Liouville domains of $T^*M$, and $\alpha \in \tilde{\pi}_1(M)$. Denote by $\mathbb B_{\alpha}(U)$ and $\mathbb B_{\alpha}(V)$ the barcodes of persistence $\k$-modules formed by filtered symplectic homologies ${\rm SH}_{\alpha}(U)$ and ${\rm SH}_{\alpha}(V)$, respectively. Then
	\[ d_{\rm bot}(\mathbb B_{\alpha}(U), \mathbb B_{\alpha}(V)) \leq d_{\rm SBM}(U,V) \]
	where $d_{\rm SBM}$ is the symplectic Banach-Mazur distance (cf.~Definition 1.4 in \cite{SZ21}). \end{theorem}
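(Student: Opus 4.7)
The strategy is to convert the geometric bound into an algebraic interleaving bound: for every $c > d_{\rm SBM}(U,V)$, I would construct a $c$-interleaving between the persistence $\k$-modules $S_{\alpha}(U) \coloneqq \{{\rm SH}^{\eta}_{*,\alpha}(U)\}_{\eta \in \bR}$ and $S_{\alpha}(V) \coloneqq \{{\rm SH}^{\eta}_{*,\alpha}(V)\}_{\eta \in \bR}$, then invoke the algebraic isometry theorem (Theorem~3.5 in \cite{BL14}), which equates the interleaving distance of persistence modules with the bottleneck distance of their barcodes, to conclude $d_{\rm bot}(\bB_{\alpha}(U), \bB_{\alpha}(V)) \leq c$. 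Letting $c \downarrow d_{\rm SBM}(U,V)$ will then give the stated inequality.

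First I would unpack Definition~1.4 in \cite{SZ21}: the condition $d_{\rm SBM}(U,V) < c$ provides two $\tilde{\pi}_1(M)$-trivial Liouville embeddings $\phi\colon U \hookrightarrow e^{c} \cdot V$ and $\psi\colon V \hookrightarrow e^{c} \cdot U$, where $e^{c}\cdot W$ denotes the rescaling of $W$ obtained by flowing along the Liouville vector field for time $c$ (so its Liouville one-form, and hence its Hamiltonian action functional, is multiplied by $e^{c}$; equivalently, the filtration level $\eta$ gets shifted to $e^{c}\eta$, or to $\eta + c$ in a logarithmic parametrization). The essential technical input is the filtered Viterbo transfer map: a Liouville embedding $\iota\colon W_1 \hookrightarrow W_2$ induces, for each $\eta$, a homomorphism ${\rm SH}^{\eta}_{*}(W_2) \to {\rm SH}^{\eta}_{*}(W_1)$ that commutes with the persistence structure maps, and which respects the decomposition by free homotopy classes provided $\iota$ is $\tilde{\pi}_1(M)$-trivial. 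Applying Viterbo transfer to $\phi$ and $\psi$ and accounting for the filtration shift produced by rescaling, one obtains two morphisms of persistence modules,
\[
F\colon S_{\alpha}(U) \longrightarrow T^{c} S_{\alpha}(V), \qquad G\colon S_{\alpha}(V) \longrightarrow T^{c} S_{\alpha}(U),
\]
where $T^{c}$ denotes the shift functor defined by $(T^{c} W)_{\eta} = W_{\eta + c}$.

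Next, I would verify the interleaving relations $G \circ F = \pi_{S_{\alpha}(U)}^{2c}$ and $F \circ G = \pi_{S_{\alpha}(V)}^{2c}$, where $\pi^{2c}$ denotes the persistence structure map of amplitude $2c$. Geometrically, the composite $\psi \circ \phi\colon U \hookrightarrow e^{2c} \cdot U$ is isotopic, through $\tilde{\pi}_1(M)$-trivial Liouville embeddings, to the tautological inclusion of $U$ into $e^{2c}\cdot U$, and the functoriality together with the isotopy-invariance of the Viterbo transfer (via standard continuation arguments in symplectic homology) identifies the composite of the two transfers with the structure map induced by this tautological inclusion, which is precisely $\pi^{2c}$. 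I expect this to be the principal technical obstacle: the continuation argument must be carried out at the filtered level, which requires carefully chosen admissible Hamiltonians and almost complex structures on the three domains $U$, $V$, and $e^{2c}\cdot U$, and a multi-step interpolation to connect them while keeping all moduli-space actions inside the prescribed filtration windows; one must also check that all continuation data can be chosen so as to preserve the splitting by the free homotopy class $\alpha$.

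Once the interleaving is established, the algebraic isometry theorem yields $d_{\rm bot}(\bB_{\alpha}(U), \bB_{\alpha}(V)) \leq c$ for every $c > d_{\rm SBM}(U,V)$, and taking infimum over such $c$ completes the proof.
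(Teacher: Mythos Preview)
The paper does not prove this theorem: it is quoted verbatim as Theorem~1.6 of \cite{SZ21} and used as a black box in the proof of Theorem~\ref{thm-app-1}. There is therefore no ``paper's own proof'' to compare your proposal against.

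That said, your outline is a faithful sketch of the argument that actually appears in \cite{SZ21}: unpack the symplectic Banach--Mazur distance to get mutual rescaled Liouville embeddings, apply filtered Viterbo transfer maps to obtain morphisms between the shifted persistence modules, verify the interleaving relations via functoriality and isotopy invariance of the transfer, and conclude by the algebraic isometry theorem. One minor caution: in Definition~1.4 of \cite{SZ21} the filtration parameter is multiplicative rather than additive, so the interleaving and bottleneck distances there are the multiplicative (logarithmic) versions; your write-up slides between $e^c$-rescaling and additive shift $T^c$ without fixing a convention, which is harmless at the level of a sketch but would need to be pinned down in a full proof.
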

	
\begin{remark} In \cite{SZ21}, the distance $d_{\rm SBM}(U,V)$ is defined via symplectic embeddings (or Liouville embeddings) between two Liouville domains with boundary. However, it can be defined in the same way for domains with or without boundary, simply by requiring that the embedding $\phi: U \hookrightarrow V$ satisfies $\phi(U) \subset \mathring V$, the interior of the target domain $V$. \end{remark}

In this section, neither the definition of $d_{\rm bot}$ nor the definition of $d_{\rm SBM}$ will be given in an explicit manner. Instead, we summarize three special cases in the following lemma, and its proof directly comes from definitions.

\begin{lemma} \label{lemma-app-1} Let $M$ be a closed $U, V$ be two Liouville domains of $T^*M$ and $\mathring{U}, \mathring{V}$ be interior parts of $U, V$. Meanwhile, let $\mathbb V, \mathbb W$ be two persistence $\k$-modules.
	\begin{itemize}
	\item[(1)] The pseudo-metric $d_{\rm SBM}$ satisfies the triangle inequality and  $d_{\rm SBM}(\mathring{U},U)=0$ for any Liouville domain $U$.
		\item[(2)] If there exist  $\tilde{\pi}_1(M)$-trivial Liouville embeddings $\phi\colon \mathring{U} \to \mathring{V}$ and $\psi\colon \mathring{V}\to \mathring{U}$, then $d_{\rm SBM}(\mathring{U}, \mathring{V})=0$.
		\item[(3)] If $d_{\rm bot}(\mathbb B(\mathbb V), \mathbb B(\mathbb W))=0$, then the collections of endpoints of all the intervals in these two barcodes are the same (as two unordered sets).
	\end{itemize}
\end{lemma}

\begin{proof} [Proof of Theorem \ref{thm-app-1}] For Finsler metrics $F_1, F_2$ on $M$, unit codisk bundles $D^*_{F_1}M$ and $D^*_{F_2}M$ are Liouville domains of $T^*M$. The item (2) in Lemma \ref{lemma-app-1} implies that $d_{\rm SBM}(\mathring{D}^*_{F_1}M, \mathring{D}^*_{F_2}M) = 0.$ Moreover, the item (1) in Lemma \ref{lemma-app-1} implies that $d_{\rm SBM}(D^*_{F_1}M, D^*_{F_2}M)=0$. Recall that $\mathbb B_{\alpha}(D^*_{F_1}M)$ is the barcode of the persistence $\k$-module formed by filtered loop space homologies on $M$ with respect to Finsler metric $F_i$, for $i = 1,2$. Theorem 7.4 in \cite{GX20} says that $\mathbb B_{\alpha}(D^*_{F_i}M)$ is the barcode of the persistence $\k$-module formed by filtered symplectic homology of domain $D^*_{F_i} M$, for $i=1, 2$. Then, by Theorem \ref{thm-app-2}, $d_{\rm bot}(\mathbb B_{\alpha}(D^*_{F_1}M), \mathbb B_{\alpha}(D^*_{F_2}M))=0.$ Since $d_{\rm bot}$ is non-degenerate, two barcodes coincide. Finally, since the collection of endpoints of all the intervals in this barcode is exactly $\Lambda_{\alpha}^{F_i}$, the item (2) in Lemma \ref{lemma-app-1} says that $\Lambda_{\alpha}^{F_1} = \Lambda_{\alpha}^{F_2}$, and we get the desired conclusion. \end{proof}

\vspace{-2mm}
\bibliographystyle{amsplain}\bibliography{biblio}

\medskip

\end{document}